\numberwithin{equation}{section}
\newtheorem{thrm}{Theorem}[section]
\newtheorem{lemma}[thrm]{Lemma}
\newtheorem{prop}[thrm]{Proposition}
\newtheorem{cor}[thrm]{Corollary}
\newtheorem{rmrk}[thrm]{Remark}
\begin{document}

\begin{abstract}
Conformally compact and complete   smooth
solutions to the Strominger system with non vanishing flux,
non-trivial instanton and non-constant dilaton using the first
Pontrjagin form of the \emph{$(-)$-connection} on 6-dimensional
non-K\"ahler nilmanifold are presented. In the conformally compact case the dilaton is determined by the
real slices of the elliptic Weierstrass function. The dilaton of
non-compact complete solutions is given by the fundamental
solution of the Laplacian on $R^4$.
\end{abstract}

\title[ Heterotic String Solutions with non-zero fluxes and
non-constant dilaton]{Non-Kaehler Heterotic String Solutions\\
with non-zero fluxes and non-constant dilaton}
\date{\today}
\author{Marisa Fern\'andez}
\address[Fern\'andez]{Universidad del Pa\'{\i}s Vasco\\
Facultad de Ciencia y Tecnolog\'{\i}a, Departamento de Matem\'aticas\\
Apartado 644, 48080 Bilbao\\
Spain}
\email{marisa.fernandez@ehu.es}
\author{Stefan Ivanov}
\address[Ivanov]{University of Sofia "St. Kl. Ohridski"\\
Faculty of Mathematics and Informatics\\
Blvd. James Bourchier 5\\
1164 Sofia, Bulgaria}
\address{and Institute of Mathematics and Informatics, Bulgarian Academy of
Sciences}
\email{ivanovsp@fmi.uni-sofia.bg}
\author{Luis Ugarte}
\address[Ugarte]{Departamento de Matem\'aticas\,-\,I.U.M.A.\\
Universidad de Zaragoza\\
Campus Plaza San Francisco\\
50009 Zaragoza, Spain}
\email{ugarte@unizar.es}
\author{Dimiter Vassilev}
\address[Dimiter Vassilev]{ Department of Mathematics and Statistics\\
University of New Mexico\\
Albuquerque, New Mexico, 87131-0001}
\email{vassilev@math.unm.edu}
\date{\today }
\maketitle
\tableofcontents



\setcounter{tocdepth}{2}

\section{Introduction}

The goal of this paper is the explicit construction of smooth
six-dimensional non-K\"ahler solution to the Strominger system with a
non-constant dilaton.

A model for string theory proposed in \cite{CHSW85} involves a ten
dimensional space $\mathbb{R}^{1,3}\times M^6$ which is the product of a
Lorentzian spacetime with a six-dimensional Calabi-Yau manifold
$M$. The latter was equipped with an SU(3) Yang-Mills connection,
Donaldson-Uhlenbeck-Yau instanton, of the Calabi-Yau metric. In a
key paper Strominger \cite{Str} systematically considered a
generalization of this construction allowing a background with
non-zero torsion, $H$-fluxes, which is motivated by physical
significance. This led to a system of differential equations known
as the \emph{Strominger system}, which specifies the geometric
{inner space} $M$ to be a complex (non-K\"ahler) conformally
balanced 6-manifold with holomorphically trivial canonical bundle
equipped, in addition, with an instanton bundle $E$ compatible
with the Green-Schwarz anomaly cancellation condition. The latter
also involves the first Pontrjagin form of a linear connection
whose determination is a part of the problem. An important problem
considered in the past thirty years is to provide ``backgrounds''
and
solutions of the Strominger system. 
Several connections have been used in order to satisfy the anomaly
condition, such as, the Levi-Civita connection \cite{Str,GMW}, the Chern
connection \cite{Str,y1,y2}, the $(+)$-connection \cite{CCDLMZ,DFG}, and the
$(-)$-connection \cite{Hull,Berg} etc..

A smooth compact solution was first found by Li \& Yau \cite{y1}
and Fu \& Yau \cite{y2,y3}. In \cite{y2}, developing the ideas of
\cite{GP}, the authors considered compact non-K\"ahler 6-manifolds
which are $\mathbb{T}^2$-bundles over a Calabi-Yau 4-manifold using the
first Pontrjagin form of the Chern connection. Thus, \cite{y2}
showed existence of a balanced metric while satisfying the
Hermitian-Yang-Mills equations and the anomaly equation. In this
case, the difficulty to satisfy anomaly cancellation condition
turns into a non-linear PDE of Monge-Amp\`ere type for the dilaton
function. We note that when $E$ is the tangent bundle of a
K\"ahler manifold $M$ the flux $H$
vanishes and Strominger's system is solved by the Calabi-Yau metric \cite%
{Yau} and the Donaldson-Uhlenbeck-Yau instanton \cite{U-Y,D}. In
particular, the non-K\"ahler case can be considered as a
generalization of Calabi's conjecture for the case of non-K\"ahler
Calabi-Yau threefolds. Since the choice of the first Pontrjagin
form of the $(-)$-connection is preferable by physical reasons
\cite{Berg}, in \cite{BSethi, BBCG} the authors considered the
smooth compact non-K\"ahler model of a $\mathbb{T}^2$-bundle over a
Calabi-Yau four manifold but with the first Pontrjagin form of the
$(-)$-connection in the anomaly cancellation. It was shown in
\cite{BSethi, BBCG} that the PDE system for the dilaton function
gives rise to a single PDE which is of the Laplace type for which
a solution exists provided the natural compatibility condition
holds.

Non-compact solutions can have different physical interpretation
in string theory \cite{CHS1,CHS,DLu,Str1}. They may be a local
models of a compact solutions or correspond to the supergravity
descriptions of the solitonic objects of the theory \cite{FLY}. A
class of non-compact smooth solutions to the Strominger system on
a $\mathbb{T}^2$ bundle over the non-compact Eguchi-Hanson space is
considered in \cite{FLY}  where the non-linear equation for the
dilaton imposed by the anomaly cancellation with the first
Pontrjagin form of the Chern connection is solved.

In this paper we construct  smooth
solutions with non vanishing flux and non-constant dilaton to the
Strominger system using the first Pontrjagin form of the
\emph{$(-)$-connection} on 6-dimensional complete non-compact manifold equipped with
conformally balanced Hermitian structures coupled with carefully
chosen instanton bundle. The source of the construction is the
already constructed smooth compact solutions to the Strominger
system with constant dilaton on nilmanifods presented in
\cite{FIUV} and the ideas of \cite{GMW} and \cite{GP}. In
particular, \cite{GMW} posed the question of solving the anomaly
condition for compact supersymmetric geometries that are two-torus
bundles over either conformally $\mathbb{T}^4$ or K3 manifold. Our main
results are explicit complete smooth examples of the
former case. 
In particular, we prove

\begin{thrm}
\label{t:cpmct strominger} The conformally compact manifold
$M^6=(\Gamma\backslash H_5, \bar g,J,\nabla^-, A_{\lambda})$
is a Hermitian manifold which solves the Strominger system with
non-constant dilaton $f$, non-trivial flux $H=\bar T$,
non-flat instanton $ A_{\lambda}$ using the first
Pontrjagin form of $\nabla^-$ and negative $\alpha^{\prime }$.
Furthermore, the heterotic equations of motion \eqref{mot} are
satisfied up to first order of $\alpha^{\prime }$.
\end{thrm}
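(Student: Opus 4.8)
The plan is to deform the compact, constant-dilaton solution on the nilmanifold $\Gamma\backslash H_5$ constructed in \cite{FIUV} by a conformal factor governed by the dilaton $f$, and to check the four ingredients of the Strominger system one at a time. First I would recall from \cite{FIUV} the invariant coframe and structure equations realizing $\Gamma\backslash H_5$ as a principal $\mathbb{T}^2$-bundle over $\mathbb{T}^4$, together with the invariant complex structure $J$, the fundamental $2$-form, and a nowhere-vanishing holomorphic $(3,0)$-form $\Omega$; the latter gives holomorphic triviality of the canonical bundle immediately. Writing the rescaled Hermitian form $\bar\omega=e^{2f}\omega$ (the precise power being fixed by the balanced normalization) with $f$ pulled back from the base, I would verify the conformally balanced condition $d\big(\|\Omega\|_{\bar\omega}\,\bar\omega^{2}\big)=0$; since the underlying invariant structure is balanced and $f$ depends only on the base, this collapses to a short computation. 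The flux is then read off as $H=\bar T$, the torsion of the Bismut-type $(-)$-connection $\nabla^-$ of $(\bar g,J)$, and $H\neq0$ because $d\bar\omega\neq0$.

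Next I would verify that the connection $A_\lambda$ is an instanton, i.e. that its curvature satisfies $F_{A_\lambda}^{0,2}=0$ and $F_{A_\lambda}\wedge\bar\omega^{2}=0$. As $A_\lambda$ is assembled from an anti-self-dual abelian connection on the base, both conditions reduce to anti-self-duality on $\mathbb{T}^4$, a property preserved under the conformal change of the base metric; hence $A_\lambda$ remains an instanton for $\bar g$ while staying non-flat for a suitable parameter $\lambda$.

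The heart of the matter is the anomaly cancellation $dH=\tfrac{\alpha'}{4}\big(p_1(\nabla^-)-p_1(A_\lambda)\big)$. Here I would compute the curvature of $\nabla^-$ from the structure equations, with the conformal factor inserted, and extract its first Pontrjagin form; do the same for $A_\lambda$; and expand the left-hand side $dH=dd^{c}\bar\omega$. Because $\nabla^-$ depends on $\bar g=e^{2f}g$, its Pontrjagin form carries the nonlinear dependence on $f$, and the bulk of the work is to show that, after collecting every torsion and curvature contribution, the whole identity reduces to a single scalar second-order equation for the conformal factor $u=e^{2f}$ on the base. I expect this reduction — tracking the $\nabla^-$ Pontrjagin terms and confirming that the unwanted pieces organize into one equation — to be the main obstacle, with the sign of the surviving source forcing $\alpha'<0$.

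Finally I would solve this scalar equation. Imposing dependence on a single base variable turns it into an ordinary differential equation which, in the conformally compact case, is the Weierstrass differential equation; it is solved by real slices of the Weierstrass $\wp$-function, which are doubly-periodic, and whose double poles are precisely where the conformal factor degenerates, creating the conformally compact ends and rendering $(\bar g,J,\nabla^-,A_\lambda)$ smooth in the interior. (The companion non-compact statement instead uses the radial Laplace equation on $\mathbb{R}^4$, solved by the fundamental solution $u\sim|x|^{-2}$.) For the last assertion, I would invoke the general principle that a solution of the gravitino, dilatino and gaugino Killing-spinor equations — equivalently, the conformally balanced, instanton and anomaly conditions just established — automatically satisfies the heterotic equations of motion \eqref{mot} modulo $O(\alpha'^{2})$, leaving only a first-order verification against the explicit curvature of $\nabla^-$.
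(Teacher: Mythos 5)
Your overall architecture coincides with the paper's: a Goldstein--Prokushkin-type deformation of the invariant balanced structure on $\Gamma\backslash H_5$, an auxiliary instanton built from anti-self-dual data, reduction of the anomaly to an ODE in one base variable solved by the Weierstrass function, and the equations of motion handled by the general principle that $R^-$ is an instanton to first order in $\alpha'$. However, two of your steps would not go through as written.

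First, the ansatz $\bar\omega=e^{2f}\omega$ is not the right deformation, and no choice of conformal power repairs it. If $\bar\omega=e^{cf}\omega$ then $\|\Omega\|_{\bar\omega}\,\bar\omega^{2}=e^{cf/2}\,\|\Omega\|_{\omega}\,\omega^{2}$, and since $df\wedge\omega^{2}$ contains the nonzero term $2\,df\wedge\omega_1\wedge\eta_1\wedge\eta_2$, the conformally balanced condition fails for every $c\neq 0$. The correct ansatz rescales only the horizontal part: $\bar F=e^{2f}\omega_1+\eta_1\wedge\eta_2$, $\bar g=e^{2f}g_{CY}+\eta_1^2+\eta_2^2$ (equivalently \eqref{conf}: $\bar e^i=e^f e^i$ for $i\leq 4$ and $\bar e^5=e^5$, $\bar e^6=e^6$); this is precisely what satisfies \eqref{strsys} with dilaton $\phi=2f$.

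Second, the instanton $A_\lambda$ is not ``an anti-self-dual abelian connection on the base'': its connection 1-forms are $-\lambda_i\,\bar e^{6}$, pointing along a fiber direction, with curvature $-\lambda_i\,d\bar e^{6}$, a constant anti-self-dual $2$-form pulled back from the base (Proposition~\ref{instanton2}). What the argument actually needs is not merely that $A_\lambda$ is an instanton but that its Pontrjagin form is the strictly negative constant $8\pi^2 p_1(A_\lambda)=-8t^2(1+\kappa^2)|\lambda|^2\,e^{1234}$. In the anomaly ODE \eqref{eqcan} the derivative terms integrate to zero over a period, so one must impose the cancellation of constants $8t^2\kappa^2+2\alpha' t^2(1+\kappa^2)|\lambda|^2=0$; this is exactly where $\alpha'<0$ is forced and where $|\lambda|$ is fixed. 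Your phrase ``the sign of the surviving source'' gestures at this, but your proposal supplies no mechanism producing the compensating negative constant --- a generic instanton genuinely pulled back from $\mathbb{T}^4$ would not contribute it in the required form. With these two corrections, the remainder of your outline (one-variable reduction, the Weierstrass equation $u'^2=4u(u-a)(u+a)$ for $u=\alpha^{-2}e^{2f}$, the real slice of $\wp$ giving the conformally compact asymptotically hyperbolic base, and the first-order equations of motion) is the paper's proof.
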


The precise definition of the background and proof of Theorem
\ref{t:cpmct strominger} are given in Section \ref{ss:compact ex}.
Our solutions are complete non-K\"ahler
$\mathbb{T}^2$ bundles over conformally compact
asymptotically hyperbolic metric on $\mathbb{T}^4$ with conformal boundary
at infinity a flat torus $\mathbb{T}^3$. Using the first Pontrjagin form of
the $(-)$-connection together with the first Pontrjagin form of a
carefully chosen instanton we arrived via the anomaly cancellation
to a single highly non-linear PDE for the dilaton function.
Assuming that the dilaton depends only on one of the independent
variables we can reduce the equation to Weiestrass' equation. This
allows us to determine the dilaton as a real slice of an elliptic
function of order two, $f= \frac 12 \ln (\alpha^2 \mathcal{P})$
where $\mathcal{P}$ is the Weierstrass'
elliptic function with pole of order two at the origin, $z=\int^\mathcal{P}%
\frac {d\mathcal{P}}{2\sqrt {\mathcal{P}\left(
\mathcal{P}-a\right) \left( \mathcal{P}+a\right)}}$. The positive
parameter $a$ depends on the group $H_5 $ and the magnitude of
$\alpha^{\prime }$.  

This suggests there could be a relation with the F-theory/heterotic duality
principle. It is a well known fact that for warped compactification there
must be some branes with negative tension \cite{GKP}. It was argued in \cite%
{IMY,MY} (see also \cite{Sen0,HK,HP} for earlier discussions) that a
negative tension brane in heterotic string theory could be understood as a
T-dual of the Atiyah-Hitchin \cite{AH} manifold.

In Section \ref{ss:non-compact ex} we present another smooth
non-compact but complete solutions to the Strominger system using
the first Pontrjagin form of the $(-)$-connection with positive
string tension on certain $\mathbb{T}^2$ bundles over $\mathbb{R}^4$ with
non-vanishing torsion, non-trivial instanton and non-constant
dilaton. We construct an instanton whose first Pontrjagin form
together with the first Pontrjagin form of the $(-)$-connection
imposes via the anomaly cancellation a system of two equations of
Laplace type on the dilaton. The non-constant dilaton function of
our smooth non-compact complete solutions is determined by a
harmonic function, the fundamental solution of the Laplacian on
$\mathbb{R}^4$. In the special case of a trivial $\mathbb{T}^2$ bundle, i.e., on the
product of $\mathbb{T}^2\times \mathbb{R}^4$ we recover the 'symmetric background
solution' from \cite{CHS} thus strengthening the conjectured
existence of a non-compact non-trivial solution satisfying the
anomaly cancellation. The precise result is the following

\begin{thrm}
\label{t:main2} The non-compact simply connected manifold $(H_5, \bar
g,J,\nabla^-,A_{0,d})$ is a complete Hermitian manifold which solves the
Strominger system with non-constant dilaton $f$ determined by %
\eqref{fundsol}, non-zero flux $H=\bar T $ and non-flat
instanton $A_{0,d}$ using the first Pontrjagin form of
$\nabla^-$ and positive $\alpha^{\prime }$.

The complete  manifold $(H_5, \bar g,J,\nabla^-,A_{0,d})$
described above also solves the heterotic equations of motion \eqref{mot} up
to the first order of $\alpha^{\prime }$.
\end{thrm}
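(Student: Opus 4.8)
The plan is to verify the four constituents of the Strominger system for the background $(H_5,\bar g,J,\nabla^-,A_{0,d})$—conformal balancedness of the Hermitian structure, the Hermitian–Yang–Mills (instanton) condition for $A_{0,d}$, the anomaly/Bianchi identity with the first Pontrjagin form of $\nabla^-$, and the resulting dilaton PDE—and then to treat completeness and the equations of motion separately.

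First I would fix the left-invariant coframe $\{e^1,\dots,e^6\}$ on the nilpotent group $H_5$ together with its structure equations, write the complex structure $J$ and the base metric in this frame, and introduce the conformally rescaled metric $\bar g=e^{2f}g$ with dilaton $f$ depending only on the $\mathbb{R}^4$ base coordinates. Since $J$ is left-invariant with closed $(1,0)$-structure equations its integrability is immediate, and the holomorphic volume form $\Psi$ together with the fundamental form $\bar\omega$ can be written explicitly. The conformally balanced condition $d(\|\Psi\|_{\bar\omega}\,\bar\omega^2)=0$ should then hold by the very choice of the conformal factor, and this identifies the flux with the torsion three-form $H=\bar T$ of the $(-)$-connection, which I would compute explicitly from $d\bar\omega$ to confirm $H\neq 0$.

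Next I would compute the curvature $R^-$ of $\nabla^-$ and its first Pontrjagin form $\operatorname{tr}(R^-\wedge R^-)$, and independently define the instanton connection $A_{0,d}$, verify that $F_A$ is of type $(1,1)$ with $F_A\wedge\bar\omega^2=0$ (so that $A_{0,d}$ is indeed an instanton), and compute $\operatorname{tr}(F_A\wedge F_A)$, which being nonzero also exhibits non-flatness. Substituting these into the anomaly equation $dH=\tfrac{\alpha'}{4}\big(\operatorname{tr}(R^-\wedge R^-)-\operatorname{tr}(F_A\wedge F_A)\big)$ should collapse the four-form identity, after matching coefficients of the independent base four-forms, to a system of two Laplace-type equations for $e^{2f}$ on $\mathbb{R}^4$; the role of the parameters in $A_{0,d}$ is precisely to render this system consistent for positive $\alpha'$. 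Solving it yields $f$ as in \eqref{fundsol}, the fundamental solution of the flat Laplacian on $\mathbb{R}^4$, which is manifestly non-constant.

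The main obstacle is this curvature bookkeeping: arranging the Pontrjagin forms of $\nabla^-$ and of $A_{0,d}$ to combine into a harmonic source with the correct sign of $\alpha'$, so that the whole anomaly reduces to a single classical Laplace equation. Completeness of $\bar g=e^{2f}g$ would then follow from the asymptotics of the fundamental solution—$e^{2f}$ blows up like $r^{-2}$ near the source, producing an infinite-length throat, and is asymptotically controlled at spatial infinity—so that no geodesic reaches a boundary in finite length. Finally, for the equations of motion \eqref{mot} I would invoke the standard fact that a solution of the Strominger system whose reference connection $\nabla^-$ is itself an $SU(3)$-instanton satisfying the Bianchi identity automatically solves the heterotic equations of motion to first order in $\alpha'$; it then remains only to check that $\nabla^-$ has holonomy contained in $SU(3)$ for this background, which is immediate from the structure equations.
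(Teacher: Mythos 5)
Your overall strategy coincides with the paper's: the Goldstein--Prokushkin ansatz \eqref{conf} disposes of the gravitino and dilatino equations, the connection $A_{0,d}$ is checked to be an $SU(3)$-instanton, the anomaly \eqref{acgen} with $p_1(\nabla^-)$ reduces to Laplace-type equations solved by the fundamental solution \eqref{fundsol}, and completeness follows from the logarithmic radial coordinate near the singularity exactly as you describe. One bookkeeping correction: the two equations $\triangle e^{2f}=0$ and $\triangle e^{-2f}=8/(3\alpha')$ do not both come from the anomaly. The first is the instanton condition for $A_{0,d}$ itself (Lemma \ref{l:insab} with $a=0$), so verifying that $A_{0,d}$ is an instanton is not a free check but a constraint on $f$ that must be imposed jointly with the anomaly; only the second equation is what survives of the anomaly after substituting the first.

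The genuine gap is in your final step. You propose to obtain the equations of motion by checking that ``$\nabla^-$ has holonomy contained in $SU(3)$, which is immediate from the structure equations.'' This conflates two different conditions and is false as an exact statement for this background. The hypothesis needed (from \cite{Iv0}) is that $R^-$ be an $SU(3)$-\emph{instanton}, i.e., that $R^-(\cdot,\cdot,X,Y)$, viewed as a 2-form in its first pair of arguments, satisfy \eqref{in2} --- a condition on the form indices, not on the holonomy. By Corollary \ref{c:nablamin inst}, $\nabla^-$ is an instanton iff $\triangle e^{2f}+8t^{2}\kappa^{2}=0$, which fails for the solution of Theorem \ref{t:main2}, where $\triangle e^{2f}=0$ and $t\neq 0$. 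What is true, and what the paper uses, is that the gravitino equation forces the holonomy of $\nabla^{+}$ (not $\nabla^{-}$) into $SU(3)$, and the identity \eqref{dtr} together with $dH=O(\alpha')$ from the anomaly then shows that $R^-$ satisfies \eqref{in2} only up to first order in $\alpha'$ --- which is precisely why the equations of motion hold only to that order. Your argument as written either proves too much or rests on a false premise, and needs to be replaced by this $\nabla^{+}$/$\nabla^{-}$ transposition argument.
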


\textbf{Our conventions:} The connection 1-forms $\omega_{ji}$ of a metric
connection $\nabla, \nabla g=0$ with respect to a local orthonormal basis $%
\{E_1,\ldots,E_d\}$ are given by $\omega_{ji}(E_k) = g(\nabla_{E_k}E_j,E_i)$%
, since we write $\nabla_X E_j = \omega^s_j(X)\, E_s$.

The curvature 2-forms $\Omega^i_j$ of $\nabla$ are given in terms of the
connection 1-forms $\omega^i_j$ by \newline
\indent
$
\Omega^i_j = d \omega^i_j + \omega^i_k\wedge\omega^k_j, \quad \Omega_{ji} =
d \omega_{ji} + \omega_{ki}\wedge\omega_{jk}, \quad
R^l_{ijk}=\Omega^l_k(E_i,E_j), \quad R_{ijkl}=R^s_{ijk}g_{ls}. $%

\indent The first Pontrjagin class is represented by the 4-form $8\pi^2
p_1(\nabla)=\sum_{1\leq i<j\leq d} \Omega^i_j\wedge\Omega^i_j.$

\section{Motivation from heterotic string theory}

The bosonic fields of the ten-dimensional supergravity which arises as low
energy effective theory of the heterotic string are the spacetime metric $g$%
, the NS three-form field strength (flux) $H$, the dilaton $\phi$ and the
gauge connection $A$ with curvature 2-form $F^A$. The bosonic geometry is of
the form $\mathbb{R}^{1,9-d}\times M^d$, where the bosonic fields are non-trivial
only on $M^d$, $d\leq 8$. We consider the two connections $
\nabla^{\pm}=\nabla^g \pm \frac12 H, $ 
where $\nabla^g$ is the Levi-Civita connection of the Riemannian metric $g$.
Both connections preserve the metric, $\nabla^{\pm}g=0$ and have totally
skew-symmetric torsion $\pm H$, respectively. We denote by $R^g,R^{\pm}$ the
corresponding curvature.

We consider the heterotic supergravity theory with an $\alpha^{\prime }$
expansion where $1/2\pi\alpha^{\prime }$ is the heterotic string tension.
The bosonic part of the ten-dimensional supergravity action in the string
frame is (\cite{HT}, \cite{Berg}, $R=R^-$)
\begin{gather}  \label{action}
S=\frac{1}{2k^2}\int d^{10}x\sqrt{-g}e^{-2\phi}\Big[Scal^g+4(\nabla^g\phi)^2-%
\frac{1}{2}|H|^2 -\frac{\alpha^{\prime }}4\Big(Tr |F^A|^2)-Tr |R|^2\Big)\Big]%
.
\end{gather}
The string frame field equations (the equations of motion induced from the
action \eqref{action}) of the heterotic string up to the first order of $%
\alpha^{\prime }$ in sigma model perturbation theory are \cite{Hu86,HT} (we
use the notations in \cite{GPap})
\begin{gather}
Ric^g(X,Y)-\frac14<i_XH,i_YH>+2((\nabla^g)^2\phi)(X,Y)-\frac{\alpha^{\prime }%
}4\Big[<i_XF^A,i_YF^A>-<i_XR,i_YR>\Big]=0  \notag \\\label{mot}
\delta(e^{-2\phi}H)=-Tr(\nabla^g(e^{-2\phi}H))=0, \quad
\delta^{\nabla^+}(e^{-2\phi}F^A)=-Tr(\nabla^+(e^{-2\phi}F^A))=0,
\end{gather}
where $i_X$ is the interior multiplication of tensors and $<.,.>$ is the
corresponding scalar product.  The field equation of the dilaton $\phi$ is implied from the first two
equations above.

The Green-Schwarz anomaly cancellation mechanism requires that the
three-form Bianchi identity receives an $\alpha^{\prime }$ correction of the
form
\begin{equation}  \label{acgen}
dH=\frac{\alpha^{\prime }}48\pi^2(p_1(M^d)-p_1(E))=\frac{\alpha^{\prime }}4 %
\Big(Tr(R\wedge R)-Tr(F^A\wedge F^A)\Big),
\end{equation}
where $p_1(M^d)$ and $p_1(E)$ are the first Pontrjagin forms of $M^d$ with
respect to a connection $\nabla$ with curvature $R$ and the vector bundle $E$
with connection $A$, respectively.

A class of heterotic-string backgrounds for which the Bianchi identity of
the three-form $H$ receives a correction of type \eqref{acgen} are those
with (2,0) world-volume supersymmetry. Such models were considered in \cite%
{HuW}. The target-space geometry of (2,0)-supersymmetric sigma models has
been extensively investigated in \cite{HuW,Str,HP1}. Recently, there is
revived interest in these models \cite{Bwit,GKMW,CCDLMZ,GMPW,GMW,GPap} as
string backgrounds and in connection with heterotic-string compactifications
with fluxes \cite%
{Car1,BBDG,BBE,BBDP,y1,y2,y3,y4,P,GPR,GPRS,GLP,Pap,AG1,AG2,Bis}.

Equations \eqref{acgen}, \eqref{action} and \eqref{mot} involve a subtlety due to
the choice of the connection $\nabla$ on $M^d$ since anomalies can be canceled
independently of the choice \cite{Hull}. Different connections correspond to
different regularization schemes in the two-dimensional worldsheet
non-linear sigma model. Hence the background fields given for the particular
choice of $\nabla$ must be related to those for a different choice by a
field redefinition \cite{Sen}. Connections on $M^d$ proposed to investigate
the anomaly cancellation \eqref{acgen} are $\nabla^g$ \cite{Str,GMW}, $%
\nabla^+$ \cite{CCDLMZ,DFG,FIUV}, $\nabla^-$ \cite%
{Hull,Berg,Car1,GPap,II,KY,KM,MS,MY,IMY}, Chern connection $\nabla^c$ when $%
d=6$ \cite{Str,y1,y2,y3,y4}.

A heterotic geometry preserves supersymmetry if and only if, in 10
dimensions, there exists at least one Majorana-Weyl spinor $\epsilon$ such
that the following Killing-spinor equations hold \cite{Str,Berg}
\begin{equation}  \label{sup1}
\nabla^+\epsilon=0, \qquad (2d\phi-H)\cdot\epsilon=0, \qquad
F^A\cdot\epsilon=0,
\end{equation}
where $\cdot$ means Clifford action of forms on spinors. The system of Killing spinor equations \eqref{sup1} together with the
anomaly cancellation condition \eqref{acgen} is known as the \emph{%
Strominger system} \cite{Str,y1}.  The last equation in \eqref{sup1} is the instanton condition which means
that the curvature $F^A$ is contained in a Lie algebra of a Lie group which
is a stabilizer of a non-trivial spinor. In dimension 6 this group is $SU(3)$
and the last equation in \eqref{sup1} is the Donaldson-Uhlenbeck-Yau
instanton. The $SU(3)$-instanton means that the trace of $F^A$ with respect
to the K\"ahler 2 form as well as the (2,0)+(0,2)-part of $F^A$ vanish
simultaneously. The real expression of the $SU(3)$-instanton condition on a
six dimensional Hermitian manifold $(M,g,J)$ is given by
\begin{equation}  \label{in2}
(F^A)^i_j(JE_k,JE_l)=(F^A)^i_j(E_k,E_l),\qquad
\sum_{k=1}^6(F^A)^i_j(E_k,JE_k)=0.
\end{equation}

The first compact torsional solutions for the heterotic/type I string were
obtained via duality from M-theory compactifications on $\mathrm{K3}\times
\mathrm{K3}$ proposed in \cite{DRS}. The metric was first written down on
the orientifold limit in \cite{DRS} and such backgrounds have since been
studied (see \cite{BBDG,BBE} and references therein). The metric and the $H$%
-flux are derived by applying a chain of supergravity dualities and the
resulting geometry in the heterotic theory is a $\mathbb{T}^2$ bundle over a
K3.

Compact smooth examples in dimension six solving \eqref{sup1} and %
\eqref{acgen} with non-zero flux $H$ and non-constant dilaton were
constructed by Li and Yau \cite{y1} for U(4) and U(5) principal bundles
taking $R=R^c$-the curvature of the Chern connection in \eqref{acgen}.
Non-K\"ahler compact solutions of \eqref{sup1} and \eqref{acgen} on some
torus bundles over Calabi-Yau 4-manifold (K3 surfaces or complex torus) are
presented by Yau et al. \cite{y2,y3,y4} using the Chern connection in %
\eqref{acgen}. Compact solutions, up to two loops, in dimension six with
non-zero flux $H$ and non-constant dilaton involving the $(-)$-connection are
investigated in \cite{BSethi, BBCG}. Compact examples solving \eqref{sup1}
and \eqref{acgen} with nonzero field strength, non-trivial instanton,
constant dilaton and taking $R=R^+$, were constructed in \cite%
{CCDLMZ,DFG,FIUV}.


In the presence of a curvature term $Tr(R\wedge R)$ the solution of the
Strominger system \eqref{sup1}, \eqref{acgen} obey the second and the third
equations of motion (the second and the third equations in \eqref{mot}) but
do not always satisfy the Einstein equations of motion (see \cite{FIUV}
where a sufficient quadratic condition on $R$ is found). It was proved in
\cite{Iv0} that \eqref{sup1} and \eqref{acgen} imply \eqref{mot} if and only
if $R$ is an instanton in dimensions 5,6,7,8, (see \cite{MS} for higher
dimensions). In particular, in dimension 6, $R$ is required to be an SU($3$%
)-instanton.

The physically relevant connection on the tangent bundle to be considered in %
\eqref{acgen}, \eqref{action}, \eqref{mot} is the $(-)$-connection \cite%
{Berg,Hull}. One reason is that the curvature $R^-$ of the $(-)$-connection is
an instanton up to the first order of $\alpha^{\prime }$ which is a
consequence of the first equation in \eqref{sup1}, \eqref{acgen} and the
well known identity
\begin{equation}  \label{dtr}
R^+(X,Y,Z,U)-R^-(Z,U,X,Y)=\frac12dH(X,Y,Z,U).
\end{equation}
Indeed, \eqref{acgen} together with \eqref{dtr} imply $%
R^+(X,Y,Z,U)-R^-(Z,U,X,Y)=O(\alpha^{\prime })$ and the first equation in %
\eqref{sup1} yields that the holonomy group of $\nabla^+$ is contained in $%
SU(n)$, i.e. the curvature 2-form $R^+(X,Y)\subset \mathfrak{su}(n)$ and therefore $R^-$
satisfies the instanton condition \eqref{in2} up to the first order of $%
\alpha^{\prime }$. Hence, a solution to the Strominger system with first
Pontrjagin form of the $(-)$-connection always satisfies the heterotic
equations of motion \eqref{mot} up to the first order of $\alpha^{\prime }$
(see e.g.\cite{MS} and references therein).

We remark that in the case of compact Hermitian manifold with holomorhically
trivial canonical bundle, the vanishing theorem from \cite{IP1,IP2} shows
that $R^-$ is an instanton if and only if the manifold is K\"ahler. Indeed, %
\eqref{dtr} yields that if $R^-$ is an instanton then the trace of $dH$ with
respect to the K\"ahler form vanishes since the holonomy group of $\nabla^+$
is contained in $\mathfrak{su}(3)$. Hence, the function $h$ defined in \cite{IP1,IP2}
as the trace of $dH$ vanishes which implies, due to \cite[Corollary~4.2]{IP1},
that there are no holomorphic top-forms unless the manifold is K\"ahler.

Concerning the Chern connection, it is shown in \cite{MS} that the curvature
of the Chern connection is an instanton up to zeros order of $\alpha^{\prime
}$ if and only if the $H$-flux vanishes and the manifold is K\"ahler. The
proof in \cite{MS} relies on a point-wise identity established in \cite{IP1}
and therefore the result is purely local.

\subsection{The geometric model}

\label{geomod}

Necessary and sufficient conditions to have a solution to the system of
gravitino and dilatino equations (the first two equations in \eqref{sup1})
in dimension $2n$ were derived by Strominger in \cite{Str} involving the
notion of $SU(n)$-structure and then studied by many authors \cite%
{GKMW,GMPW,GMW,CCDLMZ,Car1,II,BBDG,BBE,GPap,y1,y2,y3,y4}.

The gravitino equation, the first equation in \eqref{sup1} shows that there
exists a parallel spinor with respect to the $(+)$-connection. This reduces
the structure group $SO(2n)$ to a subgroup of $SU(n)$ since the holonomy
group of $\nabla^+$ reduces to a subgroup of $SU(n)$, i.e., the manifold is
an almost Hermitian manifold admitting a linear connection having totally
skew-symmetric torsion which preserves both the almost Hermitian structure
and a non-vanishing $(n,0)$-form (complex volume form).

The dilatino equation, the second identity in \eqref{sup1}, yields that the
almost complex structure is integrable and the trace of the torsion 3-form
with respect to the K\"ahler form is an exact 1-form. Strominger shows in
\cite{Str} the existence of a unique Hermitian connection with
skew-symmetric torsion on any Hermitian manifolds writing explicitly the
torsion 3-form from the exterior derivative of the K\"ahler form ($\nabla^+$
in our notations). He also shows that the $\nabla^+$-parallel complex volume
form supplies a holomorphic complex volume form whose norm determines the
dilaton.

Next we detail the model in dimension six which is the focus of the paper.
Let $(M,J,g)$ be a Hermitian 6-manifold with Riemannian metric $g$ and a
complex structure $J$. The Kaehler form $F$ and the Lee form $\theta$ are
defined by $F(\cdot,\cdot)=g(\cdot,J\cdot), \quad \theta(\cdot)=\delta
F(J\cdot),$ respectively, where $*$ is the Hodge operator and $\delta$ is
the co-differential, $\delta=-*d*$. The flux $H$, i.e., the torsion
of the connection $\nabla^+$ preserving the
Hermitian structure $(J,g)$ is given by \cite{Str}
\begin{equation}  \label{tor0}
H=T=d^cF, \qquad \text{where} \qquad d^cF(X,Y,Z)=-dF(JX,JY,JZ).
\end{equation}
Clearly, \eqref{tor0} determines the connection $\nabla^+$ uniquely since $%
\nabla^+g=0$.

An SU($3$)-structure is determined by an additional non-degenerate
(3,0)-form $\Psi=\Psi^+ + \sqrt{-1}\, \Psi^-$, or equivalently by a
non-trivial spinor, satisfying the compatibility conditions $%
F\wedge\Psi^{\pm}=0$, $\Psi^+\wedge\Psi^-=\frac23F\wedge F\wedge F$.
The subgroup of $SO(6)$ fixing the forms $F$ and $\Psi$ simultaneously is SU(%
$3$). The Lie algebra of SU($3$) is denoted $\mathfrak{s}\mathfrak{u}(3)$.

The necessary and sufficient condition for the existence of solutions to the
first two equations in \eqref{sup1} derived by Strominger \cite{Str} imply
that the 6-manifold should be a complex conformally balanced manifold (the
Lee form $\theta=2d\phi$) with non-vanishing holomorphic volume form $\Psi$
satisfying an additional condition. In terms of the five torsion classes on
dimension six, described in \cite{CS}, the Strominger condition is
interpreted in \cite{CCDLMZ} as follows (see \cite{II} for a slightly
different expression):
\begin{equation}  \label{strsys}
2F\lrcorner dF+\Psi^+\lrcorner d\Psi^+=0,
\end{equation}
where $\lrcorner$ denotes the interior multiplication. Another very useful
interpretation of this condition was proposed in \cite{y1}. If the dilaton
is constant (the Lee form $\theta =0$) then the Strominger condition reads
\begin{equation}  \label{bal}
dF\wedge F=d\Psi^+=d\Psi^-=0.
\end{equation}
Compact examples of the latter on nilmanifolds were presented in \cite%
{UV2,UV1} and examples via evolution equations were given in \cite{FTUV}.

A very promising geometric model in dimension six was proposed by
Goldstein and Prokushkin in \cite{GP} as a certain
${\mathbb{T}}^2$-bundle over a Calabi-Yau surface, which we
explain next.
Let $\Gamma_i$, $1\leq i \leq 2$, be two closed $2$-forms on a Calabi-Yau
surface $M^4$ with anti-self-dual (1,1)-part, which represent integral
cohomology classes. Denote by $\omega_1$ and by $\omega_2+\sqrt{-1}\omega_3$
the (closed) K\"ahler form and the holomorphic volume form on $M^4$,
respectively. Then, there is a (non-K\"ahler)  6-dimensional manifold
$M^6$, which is the total space of a ${\mathbb{T}}^2$-bundle over $M^4$, and
it has an $SU(3)$-structure
\begin{equation}  \label{g2def}
g=g_{CY}+\eta_1^2+\eta_2^2, \quad F=\omega_1+\eta_1\wedge\eta_2, \quad
\Psi^+=\omega_2\wedge\eta_1-\omega_3\wedge\eta_2, \quad
\Psi^-=\omega_2\wedge\eta_2+\omega_3\wedge\eta_1,
\end{equation}
where $\eta_i$, $1\leq i \leq 2$, is a $1$-form on $M^6$ such that $%
d\eta_i=\Gamma_i$, $1\leq i \leq 2$. From the construction it is easy to
check that the $SU(3)$ structure \eqref{g2def} satisfies \eqref{bal} and
therefore it solves the first two Killing spinor equations in \eqref{sup1}
with constant dilaton.

For any smooth function $f$ on $M^4$, the $SU(3)$-structure on $M^6$ given
by
\begin{equation*} 
F=e^{2f}\omega_1+\eta_1\wedge\eta_2, \quad \Psi^+=e^{2f}\Big[%
\omega_2\wedge\eta_1-\omega_3\wedge\eta_2\Big], \quad \Psi^-=e^{2f}\Big[%
\omega_2\wedge\eta_2+\omega_3\wedge\eta_1\Big]
\end{equation*}
satisfies \eqref{strsys} and therefore it solves the first two Killing spinor
equations in \eqref{sup1} with non-constant dilaton $\phi=2f$. The metric
has the form
\begin{equation*}
g_f=e^{2f}g_{cy}+\eta_1^2+\eta_2^2.
\end{equation*}
This ansatz guaranties solution to the first two equations in  \eqref{sup1}.
To achieve a smooth solution to the Strominger system we still
have to determine an auxiliary vector bundle with an instanton and a
linear connection on $M^6$ 
in order to satisfy the anomaly cancellation condition \eqref{acgen}%
. Taking the first Pontrjagin form of the Chern connection \cite%
{y1,y2,y3,y4}  leads to an equation of Monge-Amp\`ere type for the dilaton
function, while it is reduced to a PDE of Laplace type for the dilaton when
using the first Pontrjagin form of the $(-)$-connection \cite{BSethi,BBCG}.

The ${\mathbb{T}}^2$-bundle over a K3 surface construction with connection
1-forms of anti-self-dual curvature was used in \cite{y1,y2,y3,y4} to produce
the first compact smooth solutions in dimension 6 solving the heterotic
supersymmetry equations \eqref{sup1} with non-zero flux and non-constant
dilaton together with the anomaly cancellation \eqref{acgen} with the first
Pontrjagin form of the Chern connection.

\section{The anomaly cancellation and the non-constant dilaton}

We apply the construction from Section~\ref{geomod} to special non-K\"ahler
2-step nilmanifolds which are $\mathbb{T}^2$-bundles over $\mathbb{T}^4$
with connection 1-forms of anti-self-dual curvature  on the four torus and
using the first Pontrjagin form of the $(-)$-connection in investigating the
anomaly cancellation \eqref{acgen} with non-constant dilaton.

\subsection{Two-step nilmanifolds with Abelian complex structure}

In this subsection we show, due to considerations in \cite{UV2}, that the
2-step nilmanifolds which are $\mathbb{T}^2$ bundles over $\mathbb{T}^4$
with connection 1-forms of anti-self-dual curvature are precisely the
balanced Hermitian structures with Abelian complex structure, i.e. $%
[JX,JY]=[X,Y]$.

{\ The invariant balanced Hermitian structures on compact 6-dimensional
nilmanifolds which are a $\mathbb{T}^2$-bundle over a 4-torus, according to
\cite[Theorem 2.11]{UV2}, are parametrized by one of the following three
sets of equations
\begin{equation}  \label{h5real}
de^1 = de^2=de^3=de^4=0,\quad de^5 = t\,(e^{13} -e^{24}) ,\quad de^6 =
t\,(e^{14} +e^{23}),
\end{equation}
where $t\in\mathbb{R}^*$; 
\begin{equation}  \label{fam1}
\begin{cases}
\begin{array}{lcl}
de^1 {\!\!\!} & = & {\!\!\!} de^2=de^3=de^4=0, \\[4pt]
de^5 {\!\!\!} & = & {\!\!\!} \frac{t}{s}(\rho+b^2)e^{13} -\frac{t}{s}%
(\rho-b^2) e^{24} , \\[5pt]
de^6 {\!\!\!} & = & {\!\!\!} -2\,t\,(e^{12}-e^{34}) + \frac{t}{s}%
(\rho-b^2)e^{14} + \frac{t}{s}(\rho+b^2)e^{23},%
\end{array}%
\end{cases}%
\end{equation}
where $\rho\in\{0,1\}$, $b\in \mathbb{R}$ and $s,t\in\mathbb{R}^*$; 
\begin{equation}  \label{fam2}
\begin{cases}
\begin{array}{lcl}
de^1{\!\!\!} & = & {\!\!\!} de^2=de^3=de^4=0, \\[6pt]
de^5{\!\!\!} & = & {\!\!\!} s Y \left[ 2 b^2 u_1 |u|\,(e^{12}-e^{34}) - b^2
t u_1 |u| Y\,(e^{13} + e^{24}) + 2 \rho s u_1\,(e^{13} - e^{24}) \right. \\%
[4pt]
&  & \quad \left. + 2 s u_2\left((\rho-b^2)e^{14}+(\rho+b^2)e^{23}\right)
\right], \\[6pt]
de^6 {\!\!\!} & = & {\!\!\!} s Y \left[ 2 (2s^2- b^2 u_2)
|u|\,(e^{12}-e^{34})+ b^2 t u_2 |u| Y\,(e^{13}+e^{24}) - 2 \rho s
u_2\,(e^{13}-e^{24})\right. \\[4pt]
&  & \quad \left. + 2 s u_1\left((\rho-b^2)e^{14}+(\rho+b^2)e^{23}\right)
\right],%
\end{array}%
\end{cases}%
\end{equation}
where $\rho\in\{0,1\}$, $b\in \mathbb{R}$, $t\in\mathbb{R}^*$ and $u\in
\mathbb{C}^*$ such that $s^2>|u|^2>0$, and where $Y=\frac{2 \sqrt{s^2-|u|^2}%
}{|u|t}$. }

{\ In all the cases the balanced structure $(J,F)$ is given in the standard
form, i.e.
\begin{equation}  \label{adapted-basis}
Je^1=-e^2,\ Je^3=-e^4,\ Je^5=-e^6,\quad\quad F=e^{12}+e^{34}+e^{56}.
\end{equation}
}

{\ The nilpotent Lie algebras (nilmanifolds) underlying the
    families \eqref{h5real}--\eqref{fam2} are $\mathfrak{h}_k$, $2\leq k\leq 6$
    (see \cite{UV2} for a description) }
However, in order to apply the construction from Section~\ref{geomod} we are led to

\begin{lemma}
\label{lem}
Let $(J,F)$ be an invariant balanced Hermitian structure on a $6$-dimensional $2$-step nilmanifold $M$.
Then, $M$ is the total space of a $\mathbb{T}^2$-bundle over $\mathbb{T}^4$ of anti-self-dual curvature
if and only if the complex structure $J$ is Abelian. Moreover, in such case the Lie algebra underlying $M$ is isomorphic to
$\mathfrak{h}_3$ or $\mathfrak{h}_5$.
\end{lemma}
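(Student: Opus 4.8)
The plan is to rewrite both conditions in the equivalence in terms of the structure equations in the adapted coframe \eqref{adapted-basis}, and then to read the isomorphism type off the rank of the derived algebra. Set $\omega^1=e^1+\sqrt{-1}\,e^2$, $\omega^2=e^3+\sqrt{-1}\,e^4$, $\omega^3=e^5+\sqrt{-1}\,e^6$, so that $(J,F)$ is the standard Hermitian structure and $\omega_1:=e^{12}+e^{34}$ is the K\"ahler form of the prospective base. I will use two facts. First, $J$ is Abelian exactly when $d\alpha$ is of type $(1,1)$ for every $(1,0)$-form $\alpha$ (equivalently, when the $(1,0)$-space $\mathfrak{g}^{1,0}$ is an abelian subalgebra). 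Second, on the oriented Euclidean space $\langle e^1,\dots,e^4\rangle$ the anti-self-dual $2$-forms are precisely the primitive $(1,1)$-forms $\langle e^{12}-e^{34},\,e^{13}+e^{24},\,e^{14}-e^{23}\rangle$, whereas the self-dual ones are $\mathbb{R}\,\omega_1$ together with the real and imaginary parts $e^{13}-e^{24}$ and $e^{14}+e^{23}$ of the holomorphic $2$-form $\omega^1\wedge\omega^2$; moreover every nonzero anti-self-dual form $\beta$ satisfies $\beta\wedge\beta=-|\beta|^2\,e^{1234}\neq0$ and so is nondegenerate.

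I would first dispose of the easy implication together with the bundle normalisation. If $M$ is a $\mathbb{T}^2$-bundle over $\mathbb{T}^4$ with anti-self-dual curvature, then in an adapted coframe $e^1,\dots,e^4$ are closed and $de^5,de^6$ are anti-self-dual, hence of type $(1,1)$; therefore $d\omega^1=d\omega^2=0$ and $d\omega^3=de^5+\sqrt{-1}\,de^6\in\Lambda^{1,1}$, so $J$ is Abelian by the first fact. For the converse I must produce this normal form from an Abelian balanced structure. Since $M$ is $2$-step, $[\mathfrak{g},\mathfrak{g}]$ is central; I would combine the Abelian and balanced conditions to exhibit a $J$-invariant central $2$-plane $\langle e^5,e^6\rangle$ containing $[\mathfrak{g},\mathfrak{g}]$ and whose $g$-orthogonal ($J$-invariant) complement $\langle e^1,\dots,e^4\rangle$ is spanned by closed forms, which is exactly the asserted bundle structure. (Alternatively, one invokes \cite{UV2}: such a balanced structure is one of \eqref{h5real}--\eqref{fam2}, and the claim is checked family by family.) Extracting the closed $\mathbb{R}^4$ base and the $J$-invariant fibre from the Abelian balanced data is the step I expect to be the main obstacle.

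Once the normal form is available, the equivalence is a type computation. Because $e^{12},e^{34}$ are closed and $F=\omega_1+e^{56}$, the balanced condition $dF\wedge F=0$ collapses to $de^5\wedge\omega_1=de^6\wedge\omega_1=0$, i.e. to the primitivity of $de^5$ and $de^6$. Writing the primitive forms $de^5,de^6$ as their $(1,1)$-part plus their $(2,0)+(0,2)$-part, Abelianness asserts that $d\omega^3=de^5+\sqrt{-1}\,de^6$ has no $(2,0)+(0,2)$-component, and since $de^5,de^6$ are real this forces both their $(2,0)+(0,2)$-parts to vanish. Thus, under the balanced constraint, $J$ is Abelian if and only if $de^5,de^6$ are primitive $(1,1)$-forms, which by the second fact is precisely anti-self-duality of the curvature. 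This establishes the stated equivalence.

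It remains to identify the Lie algebra in the Abelian case. Then $de^5,de^6$ lie in the three-dimensional space of anti-self-dual forms, each nonzero element of which is nondegenerate. Put $k=\dim\langle de^5,de^6\rangle$; as $M$ is genuinely $2$-step, $k\geq1$, while the bundle structure gives $k\leq2$. If $k=1$, the derived algebra is one-dimensional, generated by a single rank-$4$ form, so $M$ is isomorphic to $\mathfrak{h}_3$. If $k=2$, the pencil $\langle de^5,de^6\rangle$ consists entirely of nondegenerate $2$-forms; among the balanced $2$-step algebras $\mathfrak{h}_2,\dots,\mathfrak{h}_6$, only $\mathfrak{h}_5$ has this property (the derived pencils of $\mathfrak{h}_2,\mathfrak{h}_4,\mathfrak{h}_6$ each contain a rank-$2$ form), so $M$ is isomorphic to $\mathfrak{h}_5$. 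Hence the underlying Lie algebra is $\mathfrak{h}_3$ or $\mathfrak{h}_5$, as claimed.
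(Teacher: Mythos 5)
Your argument is correct in substance but follows a genuinely different route from the paper. The paper's proof is a direct inspection of the classification \eqref{h5real}--\eqref{fam2} from \cite[Theorem 2.11]{UV2}: since $s,t,u,Y\neq 0$, one reads off that $de^5,de^6\in\langle e^{12}-e^{34},e^{13}+e^{24},e^{14}-e^{23}\rangle$ exactly for the families \eqref{fam1} and \eqref{fam2} with $\rho=0$, which is precisely the condition that $J$ be Abelian, and then $b\in\{0,1\}$ gives $\mathfrak{h}_3$ resp.\ $\mathfrak{h}_5$. You instead explain \emph{why} this coincidence occurs: anti-self-dual equals primitive $(1,1)$, the balanced condition already forces $de^5,de^6$ to be primitive, and Abelianness is exactly the $(1,1)$-condition on $d\omega^3$ --- a pointwise linear-algebra identity that the family-by-family check obscures. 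Your identification of $\mathfrak{h}_3$ versus $\mathfrak{h}_5$ via the rank behaviour of the derived pencil (every nonzero anti-self-dual form is nondegenerate) is likewise a nice invariant reformulation, although it still quietly relies on the list $\mathfrak{h}_2,\dots,\mathfrak{h}_6$ of balanced $2$-step algebras from \cite{UV2}.

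The one place where your independent route does not close is the step you yourself flag: producing the normal form (closed $e^1,\dots,e^4$ together with a $J$-invariant central plane $\langle e^5,e^6\rangle$) from an arbitrary Abelian balanced structure. The structural sketch ($J$-invariance of the centre for Abelian $J$, centrality of $[\mathfrak{g},\mathfrak{g}]$ in the $2$-step case) is plausible but not carried out, and as written the converse direction is only completed by your fallback of invoking \cite{UV2} --- which is exactly what the paper does for the entire lemma. So the net effect is: same external input, but your type-decomposition argument replaces the verification over the three explicit families and makes the equivalence transparent.
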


\begin{proof}
The curvature of the bundle is determined by the 2-forms $de^5$ and $de^6$
in the structure equations \eqref{h5real}, \eqref{fam1} and \eqref{fam2}. Taking into account that $s,t,u,Y\not=0$
we get that $de^5, de^6 \in \langle e^{12}-e^{34},e^{13}+e^{24},e^{14}-e^{23}\rangle$
if and only if the balanced Hermitian structure is given by \eqref{fam1} or \eqref{fam2} with $\rho=0$.
The latter condition means that $J$ is an Abelian complex structure. Finally,
when $\rho=0$ we can take $b\in\{0,1\}$ since the corresponding balanced Hermitian structures are
isomorphic. The case $b=0$, resp. $b=1$, corresponds to
structures on the Lie algebra $\mathfrak{h}_3$, resp. $\mathfrak{h}_5$.
\end{proof}

{\
Notice that $\mathfrak{h}_3$ is the Lie algebra underlying the nilmanifold given by the
product of the 5-dimensional generalized Heisenberg nilmanifold by $S^1$,
whereas $\mathfrak{h}_5$ is the Lie algebra underlying the Iwasawa manifold.
It is important to note that the holonomy of the $(+)$-connection of any balanced structure $(J,F)$
with $J$ Abelian is a subgroup of SU(2), hence inside SU(3), \cite{UV2}.}


\subsection{Non-constant dilaton in 6-D}

\label{s:alg5}

Here we consider the Lie algebra $\mathfrak{h}_5$, which we describe below.
We shall construct a background with non-constant dilaton with non-trivial
instanton and flux. By a contraction, this will also give analogous
solutions on the Lie algebra $\mathfrak{h}_3$ as we shall explain later in
Section~\ref{s:alg all 6-D}.

The structure equations of the Lie algebra $\mathfrak{h}_5$ are
\begin{equation}  \label{fam1h5}
de^1 = de^2=de^3=de^4=0,\quad de^5 = \frac{t}{s}\,(e^{13} + e^{24}),\quad
de^6 = -2\, t\,(e^{12}-e^{34}) - \frac{t}{s}\, (e^{14}-e^{23}),
\end{equation}
where $s,t\in\mathbb{R}^*$. We note that the structure equations %
\eqref{fam1h5} 
are obtained from the family \eqref{fam1}
taking there $\rho=0$ and $b=1$. The corresponding Lie group $H_5$ can be
considered as a $\mathbb{R}^2$-bundle over $\mathbb{R}^4$. Moreover, the
balanced structure $(J,F)$ on $\mathfrak{h}_5$ is given in the standard form
given by \eqref{adapted-basis}.


Let $f$ be a smooth function on $\mathbb{R}^4$. Following \cite{GP} we
consider the metric $\bar{g}$ on $\mathfrak{h}_5$ for which the basis of
1-forms
\begin{equation}  \label{conf}
\bar{e}^{1}=e^{f}\,e^{1},\quad \bar{e}^{2}=e^{f}\,e^{2},\quad \bar{e}%
^{3}=e^{f}\,e^{3},\quad \bar{e}^{4}=e^{f}\,e^{4},\quad \bar{e}%
^{5}=e^{5},\quad \bar{e}^{6}=e^{6}
\end{equation}
is orthonormal.
The K\"ahler form of the new Hermitian structure $(\bar g,J)$ is given by
\begin{equation*}
\bar{F}=\bar{e}^{12}+\bar{e}^{34}+\bar{e}^{56}=e^{2f}(e^{12}+e^{34})+e^{56},
\end{equation*}
where $df=\sum_{i=1}^{4}f_{i}e^{i}$, i.e., in local coordinates $f_{i}=\frac{%
\partial f}{\partial x_{i}}$. Furthermore,
\begin{equation*}
\begin{array}{rcl}
d\bar{F}\!\! & \!\!=\!\! & \!\!2e^{-f}f_{3}\,\bar{e}^{123}+2e^{-f}f_{4}\,%
\bar{e}^{124}+2te^{-2f}\bar{e}^{125}+2e^{-f}f_{1}\,\bar{e}^{134}+\frac{t}{s}%
e^{-2f}\bar{e}^{136}+\frac{t}{s}e^{-2f}\bar{e}^{145} \\[7pt]
\!\! & \!\!+\!\! & \!\!2e^{-f}f_{2}\,\bar{e}^{234}-\frac{t}{s}e^{-2f}\bar{e}%
^{235}+\frac{t}{s}e^{-2f}\bar{e}^{246}-2te^{-2f}\bar{e}^{345}.%
\end{array}%
\end{equation*}%
According to \eqref{tor0}, the torsion 3-form $\bar{T}$ is represented by
\begin{equation}  \label{tor5}
\begin{array}{rcl}
\bar{T}=Jd\bar{F}\!\! & \!\!=\!\! & \!\!2e^{-f}f_{4}\,\bar{e}%
^{123}-2e^{-f}f_{3}\,\bar{e}^{124}-2te^{-2f}\bar{e}^{126}+2e^{-f}f_{2}\,\bar{%
e}^{134}+\frac{t}{s}e^{-2f}\bar{e}^{135} \\[7pt]
\!\! & \!\!-\!\! & \!\! \frac{t}{s}e^{-2f}\bar{e}^{146}-2e^{-f}f_{1}\,\bar{e}%
^{234}+\frac{t}{s}e^{-2f}\bar{e}^{236}+\frac{t}{s}e^{-2f}\bar{e}%
^{245}+2te^{-2f}\bar{e}^{346}.%
\end{array}%
\end{equation}%
At this point we define the constant
\begin{equation}  \label{kappa}
\kappa^2=1/2\left (2+1/s^2 \right).
\end{equation}
Letting $f_{ij}=\frac{\partial ^{2}f}{\partial x_{j}\partial x_{i}}$, $%
i,j=1,2,3,4$, a short calculation gives
\begin{equation}  \label{fam1h5-abel-diff-new-torsion}
d\bar{T} =-e^{-4f}\left[ \triangle e^{2f}+4t^{2}\Big(2+\frac{1}{s^{2}}\Big)%
\right] \,\bar{e}^{1234}=-\left[ \triangle e^{2f}+8t^{2}\kappa^2\right] \,{e}%
^{1234},
\end{equation}
where $\triangle e^{2f}=( e^{2f})_{11}+( e^{2f})_{22}+( e^{2f})_{33}+(
e^{2f})_{44}$ is the standard Laplacian on $\mathbb{R}^4$.

\begin{cor}
\label{c:nablamin inst} The $(-)$-connection is an instanton if and only if
the torsion 3-form is closed, $d\bar T=0$, i.e., the dilaton function $f$
satisfies the equality
\begin{equation}  \label{in1}
\triangle e^{2f}+8t^{2}\kappa^2=0.
\end{equation}
\end{cor}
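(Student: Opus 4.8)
The plan is to derive the equivalence from the curvature identity \eqref{dtr} and the holonomy reduction of the $(+)$-connection, and then to read the analytic condition off the formula \eqref{fam1h5-abel-diff-new-torsion} for $d\bar T$ that has just been computed. First I would record the key input: by the Goldstein--Prokushkin construction recalled in Section~\ref{geomod}, the $SU(3)$-structure $(\bar g,J,\bar F,\Psi)$ with dilaton $\phi=2f$ solves the first two equations of \eqref{sup1}, in particular the gravitino equation $\nabla^+\epsilon=0$. Hence the holonomy of $\nabla^+$ reduces to $SU(3)$, and by Ambrose--Singer $R^+(X,Y)\in\mathfrak{su}(3)$ for every $X,Y$. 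Writing this out in the orthonormal frame dual to \eqref{conf}, membership in $\mathfrak{su}(3)$ is equivalent to $R^+$ being of type $(1,1)$ and trace-free in its last two (endomorphism) slots, i.e.
\begin{equation*}
R^+(X,Y,JZ,JU)=R^+(X,Y,Z,U), \qquad \sum_{k=1}^{6} R^+(X,Y,\bar E_k,J\bar E_k)=0 .
\end{equation*}

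Next I would rewrite \eqref{dtr} as $R^-(Z,U,X,Y)=R^+(X,Y,Z,U)-\frac12\, d\bar T(X,Y,Z,U)$ and feed it into the two instanton conditions \eqref{in2} for $R^-$, whose active (2-form) slots are now the first pair $(Z,U)$. Using the two displayed identities for $R^+$ to cancel the $R^+$-contributions, the type-$(1,1)$ condition on $R^-$ reduces to $d\bar T(X,Y,JZ,JU)=d\bar T(X,Y,Z,U)$ and the primitivity (trace) condition reduces to $\sum_{k} d\bar T(X,Y,\bar E_k,J\bar E_k)=0$. Thus $R^-$ is an instanton if and only if the $4$-form $d\bar T$ satisfies these two conditions. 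If $d\bar T=0$ both hold trivially and $R^-$ simply inherits the instanton property of $R^+$; this is the easy implication.

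For the converse I would use the explicit shape $d\bar T=c\,\bar e^{1234}$ with $c=-e^{-4f}\bigl[\triangle e^{2f}+8t^2\kappa^2\bigr]$ supplied by \eqref{fam1h5-abel-diff-new-torsion} and \eqref{kappa}. Since the standard form \eqref{adapted-basis} of $J$ pairs the frame as $(1,2),(3,4),(5,6)$, evaluating the trace condition on $X=\bar E_1$, $Y=\bar E_2$ leaves only the terms $k=3,4$, which add with the same sign to give $\sum_k \bar e^{1234}(\bar E_1,\bar E_2,\bar E_k,J\bar E_k)=\pm 2\neq 0$. Hence the trace condition forces $c=0$, that is $\triangle e^{2f}+8t^2\kappa^2=0$, which is exactly \eqref{in1} and is equivalent to $d\bar T=0$. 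This closes the equivalence.

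The only genuine computation --- and the point I would treat as the main obstacle --- is this last trace evaluation: one must check that the $(2,2)$-form $\bar e^{1234}$ has nonvanishing $J$-trace in the sense of the second equation of \eqref{in2}, so that the instanton condition actually detects the coefficient $c$ rather than being automatically satisfied. Everything upstream is formal once the reduction $R^+\in\mathfrak{su}(3)$ and the identity \eqref{dtr} are available; in particular no direct computation of the connection forms of $\nabla^-$ or of the curvature $R^-$ is needed.
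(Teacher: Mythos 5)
Your proposal is correct and follows essentially the same route as the paper: the paper's proof is precisely to take the trace in \eqref{dtr}, use the holonomy reduction of $\nabla^+$ to $SU(3)$, and read off the condition from \eqref{fam1h5-abel-diff-new-torsion}. You have merely spelled out the details (in particular the nonvanishing $J$-trace of $\bar e^{1234}$, which the paper leaves implicit), and these details check out.
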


\begin{proof}
Take the trace in \eqref{dtr} and use \eqref{fam1h5-abel-diff-new-torsion}
together with the fact that the holonomy of $\nabla^+$ is contained in $%
SU(3)$ to conclude that $R^-$ satisfies the instanton condition \eqref{in2}
if and only if \eqref{in1} holds.
\end{proof}

\subsection{The first Pontrjagin form of the $(-)$-connection}

The $(-)$-connection of the Hermitian structure $(\bar g,J)$ is defined by the
formula $\nabla ^-=\nabla ^{\bar{g}}-\frac12\bar{T}$, where $\nabla ^{\bar{g}%
}$ is the Levi-Civita connection of the metric $\bar g$ and the
torsion is determined in \eqref{tor5}.

Using the metric $\bar{g}$, let $\{\bar{e}_{1},\ldots ,\bar{e}_{6}\}$ be the
dual to $\{\bar{e}^{1},\ldots ,\bar{e}^{6}\}$ orthonormal basis. From
Koszul's formula, we have that the Levi-Civita connection 1-forms $(\omega ^{%
\bar{g}})_{\bar j}^{\bar i}$ are given by
\begin{multline}
(\omega ^{\bar{g}})_{\bar j}^{\bar i}(\bar{e}_{k})=-\frac{1}{2}\Big(\bar{g}(\bar{e}%
_{i},[\bar{e}_{j},\bar{e}_{k}])-\bar{g}(\bar{e}_{k},[\bar{e}_{i},\bar{e}%
_{j}])+\bar{g}(\bar{e}_{j},[\bar{e}_{k},\bar{e}_{i}])\Big)  \label{lc} \\
=\frac{1}{2}\Big(d\bar{e}^{i}(\bar{e}_{j},\bar{e}_{k})-d\bar{e}^{k}(\bar{e}%
_{i},\bar{e}_{j})+d\bar{e}^{j}(\bar{e}_{k},\bar{e}_{i})\Big)
\end{multline}%
taking into account $\bar{g}(\bar{e}_{i},[\bar{e}_{j},\bar{e}_{k}])=-d\bar{e}%
^{i}(\bar{e}_{j},\bar{e}_{k})$. With the help of \eqref{lc} we
compute the expressions for the connection 1-forms $(\omega
^{-})_{\bar j}^{\bar i}$ of the connection $\nabla ^{-}$,
\begin{equation}
(\omega ^{-})_{\bar j}^{\bar i}=(\omega ^{\bar{g}})_{\bar j}^{\bar i}-\frac{1}{2}(\bar{T}%
)_{\bar j}^{\bar i},\qquad\text{ where }\qquad (\bar{T})_{\bar j}^{\bar i}(\bar{e}_{k})=\bar{T}(%
\bar{e}_{i},\bar{e}_{j},\bar{e}_{k}).  \label{minus}
\end{equation}%
Now, \eqref{minus}, \eqref{lc} and \eqref{tor5} show that the non-zero
connection 1-forms $(\omega ^{-})_{\bar j}^{\bar i}$ are given in terms of the basis $%
\{{\bar e}^{1},\ldots ,{\bar e}^{6}\}$ by

{\small
\begin{equation}\label{connection-1-form-for-mu}
\begin{array}{l}
(\omega ^-{\ )}_{\bar 2}^{\bar 1}=e^{-f}\left[ f_{2}\,\bar{e}^{1}-f_{1}\,\bar{e}%
^{2}+f_{4} \,\bar{e}^{3}-f_{3}\,\bar{e}^{4}\right] , \quad 
(\omega ^-{\ )}_{\bar 3}^{\bar 1}=e^{-f}\left[ f_{3}\,\bar{e}^{1}-f_{4}\,%
\bar{e}^{2}-f_{1}\,\bar{e}^{3}+f_{2} \,\bar{e}^{4} \right], \\[2pt]
(\omega ^-{\ )}_{\bar 4}^{\bar 1}=e^{-f}\left[ f_{4}\,\bar{e}^{1}+f_{3} \,%
\bar{e}^{2}-f_{2} \,\bar{e}^{3}-f_{1}\,\bar{e}^{4}\right], \quad
(\omega ^-{\ )}_{\bar 5}^{\bar 1}=-e^{-2f}\frac{t}{s}\,\bar{e}%
^{3},
\\[7pt] (\omega ^-{\ )}_{\bar 6}^{\bar 1}=e^{-2f}2t\left[ \,\bar{e}^{2}+%
\frac{1}{2s}\bar{e}^{4}\right] \,, \quad

(\omega ^-{\ )}_{\bar 3}^{\bar 2}=e^{-f}\left[ f_{4} \,\bar{e}^{1}+f_{3}\,%
\bar{e}^{2}-f_{2}\,\bar{e}^{3}-f_{1} \,\bar{e}^{4}\right] , \\[7pt]

(\omega ^-{\ )}_{\bar 4}^{\bar 2}=e^{-f}\left[ -f_{3} \,\bar{e}^{1}+f_{4}\,%
\bar{e}^{2}+f_{1} \,\bar{e}^{3}-f_{2}\,\bar{e}^{4}\right] , \quad

(\omega ^-{\ )}_{\bar 5}^{\bar 2}=-e^{-2f}\frac{t}{s}\,\bar{e}%
^{4},
\\[7pt] (\omega ^-{\ )}_{\bar 6}^{\bar 2}=e^{-2f}2t\left[ -\,\bar{e}^{1}-%
\frac{1}{2s}\,\bar{e}^{3}\right] , \quad

(\omega ^-{\ )}_{\bar 4}^{\bar 3}=e^{-f}\left[ f_{2}
\,\bar{e}^{1}-f_{1}
\,\bar{e}^{2}+f_{4}\,\bar{e}^{3}-f_{3}\,\bar{e}^{4}\right] , \\[7pt]

(\omega ^-{\ )}_{\bar 5}^{\bar 3}=e^{-2f}\frac{t}{s}\,\bar{e}%
^{1},\qquad (\omega ^-{\ )}_{\bar 6}^{\bar 3}=e^{-2f}2t\left[ \frac{1}{2s}\,\bar{%
e}^{2}-\,\bar{e}^{4}\right] , \\[7pt]

(\omega ^-{\ )}_{\bar 5}^{\bar 4}=e^{-2f}\frac{t}{s}\,\bar{e}%
^{2},\qquad (\omega ^-{\ )}_{\bar 6}^{\bar 4}=e^{-2f}2t\left[ -\frac{1}{2s}\,%
\bar{e}^{1}+\,\bar{e}^{3}\right] .%
\end{array}%
\end{equation}
}

A long straightforward calculation
using \eqref{connection-1-form-for-mu} gives in terms of the basis $\{{\bar e%
}^{1},\ldots ,{\bar e}^{6}\}$ the following formulas for the
curvature 2-forms of $\nabla ^{-}$

{\small
\begin{equation*}
\begin{array}[t]{ll}
(\Omega ^{-})_{\bar 2}^{\bar 1}\!\!= & \!%
\!-(f_{11}+f_{22}+2f_{3}^{2}+2f_{4}^{2}+4t^{2}e^{-2f})\,e^{-2f}\overline{e}%
^{12}+(f_{14}-f_{23}+2f_{2}f_{3}-2f_{1}f_{4})e^{-2f}(\overline{e}^{13}+%
\overline{e}^{24}) \\
\!\! & -(f_{13}+f_{24}-2f_{1}f_{3}-2f_{2}f_{4}+2t^{2}/se^{-2f})e^{-2f}(%
\overline{e}^{14}-\overline{e}^{23}) \\
& -\,(2f_{1}^{2}+2f_{2}^{2}+f_{33}+f_{44}+2t^{2}/s^{2}e^{-2f})e^{-2f}%
\overline{e}^{34}, \\
(\Omega ^{-})_{\bar 3}^{\bar 1}\!\!= & \!%
\!-(f_{14}+f_{23}-2f_{2}f_{3}-2f_{1}f_{4})e^{-2f}(\overline{e}^{12}-%
\overline{e}%
^{34})-(f_{11}+2f_{2}^{2}+f_{33}+2f_{4}^{2}+t^{2}/s^{2}e^{-2f})e^{-2f}%
\overline{e}^{13} \\
\!\! & \!\!+(f_{12}-2f_{1}f_{2}-f_{34}+2f_{3}f_{4})e^{-2f}(\overline{e}^{14}-%
\overline{e}^{23})\!\! \\
&
+(2f_{1}^{2}+f_{22}+2f_{3}^{2}+f_{44}+t^{2}/s^{2}e^{-2f}+4t^{2}e^{-2f})e^{-2f}%
\overline{e}^{24}, \\
(\Omega ^{-})_{\bar 4}^{\bar 1}\!\!= & \!%
\!(f_{13}-f_{24}-2f_{1}f_{3}+2f_{2}f_{4}-2t^{2}/se^{-2f})e^{-2f}(\overline{e}%
^{12}-\overline{e}^{34})-(f_{12}-2f_{1}f_{2}+f_{34}-2f_{3}f_{4})e^{-2f}(%
\overline{e}^{13}+\overline{e}^{24}) \\
\!\! & \!\!-(f_{11}+2f_{2}^{2}+2f_{3}^{2}+f_{44}+t^{2}/s^{2}e^{-2f})\,e^{-2f}%
\overline{e}%
^{14}-(2f_{1}^{2}+f_{22}+f_{33}+2f_{4}^{2}+t^{2}/s^{2}e^{-2f}+4t^{2}e^{-2f})e^{-2f}%
\overline{e}^{23}, \\
(\Omega ^{-})_{\bar 5}^{\bar 1}\!\!= & \!\!2e^{-3f}t/s\left[ f_{4}(\overline{e}^{12}-%
\overline{e}^{34})+f_{1}(e^{13}+e^{24})-f_{2}(\overline{e}^{14}-\overline{e}%
^{23})\right] , \\
(\Omega ^{-})_{\bar 6}^{\bar 1}\!\!= & \!\!2e^{-3f}t/s\left[ \!\!(f_{3}-2f_{1}s)(%
\overline{e}^{12}-\overline{e}^{34})-(f_{2}-2f_{4}s)(\overline{e}^{13}+%
\overline{e}^{24})-(f_{1}+2f_{3}s)(\overline{e}^{14}-\overline{e}^{23})%
\right] , \\
(\Omega ^{-})_{\bar 3}^{\bar 2}\!\!= & \!%
\!(f_{13}-f_{24}-2f_{1}f_{3}+2f_{2}f_{4}+2t^{2}/se^{-2f})e^{-2f}(\overline{e}%
^{12}-\overline{e}^{34})-(f_{12}-2f_{1}f_{2}+f_{34}-2f_{3}f_{4})e^{-2f}(%
\overline{e}^{13}+\overline{e}^{24}) \\
\!\! & \!\!%
\!-(f_{11}+2f_{2}^{2}+2f_{3}^{2}+f_{44}+t^{2}/s^{2}e^{-2f}+4t^{2}e^{-2f})e^{-2f}%
\overline{e}^{14}%
\!-(2f_{1}^{2}+f_{22}+f_{33}+2f_{4}^{2}+t^{2}/s^{2}e^{-2f})e^{-2f}\overline{e%
}^{23}, \\
(\Omega ^{-})_{\bar 4}^{\bar 2}\!\!= & \!%
\!(f_{14}+f_{23}-2f_{2}f_{3}-2f_{1}f_{4})e^{-2f}(\overline{e}^{12}-\overline{%
e}%
^{34})+(f_{11}+2f_{2}^{2}+f_{33}+2f_{4}^{2}+t^{2}/s^{2}e^{-2f}+4t^{2}e^{-2f})e^{-2f}%
\overline{e}^{13} \\
\!\! & \!\!-(f_{12}-2f_{1}f_{2}-f_{34}+2f_{3}f_{4})e^{-2f}(\overline{e}^{14}-%
\overline{e}%
^{23})-(2f_{1}^{2}+f_{22}+2f_{3}^{2}+f_{44}+t^{2}/s^{2}e^{-2f})e^{-2f}\,%
\overline{e}^{24}, \\
(\Omega ^{-})_{\bar 5}^{\bar 2}\!\!= & \!\!2e^{-3f}t/s\left[ -f_{3}(\overline{e}^{12}-%
\overline{e}^{34})+f_{2}(\overline{e}^{13}+\overline{e}^{24})+f_{1}(%
\overline{e}^{14}-\overline{e}^{23})\right] , \\
(\Omega ^{-})_{\bar 6}^{\bar 2}\!\!= & \!\!2e^{-3f}t/s\left[ (f_{4}-2f_{2}s)(\overline{%
e}^{12}-\overline{e}^{34})+(f_{1}-2f_{3}s)(\overline{e}^{13}+\overline{e}%
^{24})-(f_{2}+2f_{4}s)(\overline{e}^{14}-\overline{e}^{23})\right] , \\
(\Omega ^{-})_{\bar 4}^{\bar 3}\!\!= & \!\!-%
\,(f_{11}+f_{22}+2f_{3}^{2}+2f_{4}^{2}+2t^{2}/s^{2}e^{-2f})e^{-2f}\overline{e%
}^{12}+(f_{14}-f_{23}+2f_{2}f_{3}-2f_{1}f_{4})e^{-2f}(\overline{e}^{13}+%
\overline{e}^{24}) \\
\!\! & \!\!-(f_{13}+f_{24}-2f_{1}f_{3}-2f_{2}f_{4}-2t^{2}/se^{-2f})e^{-2f}(%
\overline{e}^{14}-\overline{e}%
^{23})-(2f_{1}^{2}+2f_{2}^{2}+f_{33}+f_{44}+4t^{2}e^{-2f})\,e^{-2f}\overline{%
e}^{34}, \\
(\Omega ^{-})_{\bar 5}^{\bar 3}\!\!= & \!\!2e^{-3f}t/s\left[ f_{2}(\overline{e}^{12}-%
\overline{e}^{34})+f_{3}(\overline{e}^{13}+\overline{e}^{24})+f_{4}(%
\overline{e}^{14}-\overline{e}^{23})\right] , \\
(\Omega ^{-})_{\bar 6}^{\bar 3}\!\!= & \!\!2e^{-3f}t/s\left[ -(f_{1}+2f_{3}s)(%
\overline{e}^{12}-\overline{e}^{34})+(f_{4}+2f_{2}s)(\overline{e}^{13}+%
\overline{e}^{24})-(f_{3}-2f_{1}s)(\overline{e}^{14}-\overline{e}^{23})%
\right] , \\
(\Omega ^{-})_{\bar 5}^{\bar 4}\!\!= & 2e^{-3f}t/s\left[ \!\!-f_{1}(\overline{e}^{12}-%
\overline{e}^{34})+f_{4}(\overline{e}^{13}+\overline{e}^{24})-f_{3}(%
\overline{e}^{14}-\overline{e}^{23})\right] , \\
(\Omega ^{-})_{\bar 6}^{\bar 4}\!\!= & \!2e^{-3f}t/s\left[ \!-(f_{2}+2f_{4}s)(%
\overline{e}^{12}-\overline{e}^{34})-(f_{3}+2f_{1}s)(\overline{e}^{13}+%
\overline{e}^{24})-(f_{4}-2f_{2}s)(\overline{e}^{14}-\overline{e}^{23})%
\right] , \\
(\Omega ^{-})_{\bar 6}^{\bar 5}\!\!= & 2e^{-4f}t^{2}/s^{2}\!\!\left[ -(\overline{e}%
^{12}-\overline{e}^{34})+2s(\overline{e}^{14}-\overline{e}^{23})\right].%
\end{array}%
\end{equation*}%
}

\begin{prop}
The first Pontrjagin form of $\nabla ^{-}$ is a scalar multiple of $e^{1234}$
given by {\small
\begin{equation}  \label{fam1h5-p1}
\pi ^{2}p_{1}(\nabla ^{-}) =\left[ \sum_{1\leq i<j\leq 4}\left(
det(f_{ij})+(f_{i}^{2}f_{j})_{j}+(f_{i}f_{j}^{2})_{i}\right)
+\sum_{i=1}^{4}(f_{i}^{3})_{i}-\frac{3}{2}t^{2}\kappa ^{2}\triangle e^{-2f}%
\right] {e}^{1234}.
\end{equation}%
}
\end{prop}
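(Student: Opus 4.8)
The plan is to compute the $4$-form directly from the curvature table displayed above, using the convention $8\pi^2p_1(\nabla^-)=\sum_{1\le i<j\le 6}(\Omega^-)^{i}_{j}\wedge(\Omega^-)^{i}_{j}$, and then divide by $8$; concretely, I want to show this sum equals $8\,[\,\cdots]\,e^{1234}$ with the bracket as in \eqref{fam1h5-p1}. The one structural observation that makes the \emph{shape} of the answer immediate is that every one of the fifteen curvature $2$-forms listed above involves only $\bar e^{1},\dots,\bar e^{4}$: each $(\Omega^-)^{i}_{j}$ lies in $\Lambda^2 W$ with $W=\langle \bar e^{1},\bar e^{2},\bar e^{3},\bar e^{4}\rangle$. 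Hence each wedge-square lies in $\Lambda^4 W=\mathbb R\,\bar e^{1234}$, and since $\bar e^{1234}=e^{4f}e^{1234}$ by \eqref{conf}, the total is automatically a scalar multiple of $e^{1234}$. Only the scalar remains to be identified.

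To extract the scalar I would decompose each curvature form in the self-dual/anti-self-dual basis of $\Lambda^2 W$, namely the anti-self-dual triple $\sigma_1=\bar e^{12}-\bar e^{34}$, $\sigma_2=\bar e^{13}+\bar e^{24}$, $\sigma_3=\bar e^{14}-\bar e^{23}$ and the self-dual triple $\tau_1=\bar e^{12}+\bar e^{34}$, $\tau_2=\bar e^{13}-\bar e^{24}$, $\tau_3=\bar e^{14}+\bar e^{23}$, which satisfy $\sigma_a\wedge\sigma_b=-2\,\delta_{ab}\,\bar e^{1234}$, $\tau_a\wedge\tau_b=2\,\delta_{ab}\,\bar e^{1234}$ and $\sigma_a\wedge\tau_b=0$. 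For $\Omega=\sum_a C_a\sigma_a+\sum_a D_a\tau_a$ this yields at once $\Omega\wedge\Omega=2\big(\sum_a D_a^2-\sum_a C_a^2\big)\bar e^{1234}$. This uniform formula also disposes of a subtlety: in the "diagonal block'' forms such as $(\Omega^-)^{1}_{3}$ the monomials $\bar e^{13}$ and $\bar e^{24}$ (resp.\ $\bar e^{14},\bar e^{23}$) occur with unequal coefficients, so a genuine self-dual $\tau$-component appears and must be tracked. Applying the formula to all fifteen entries reduces the whole computation to bookkeeping of the coefficients in the curvature table.

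I would organize the sum into three blocks: the six forms $(\Omega^-)^{i}_{j}$ with $1\le i<j\le 4$, the eight mixed forms $(\Omega^-)^{i}_{5},(\Omega^-)^{i}_{6}$ with $1\le i\le 4$ (each carrying the overall factor $2e^{-3f}t/s$), and the single form $(\Omega^-)^{5}_{6}\propto 2e^{-4f}t^{2}/s^{2}$; and within each block I would separate the contributions independent of $t$ from those carrying explicit $t^{2}e^{-2f}$ factors. The $t$-independent terms then assemble, via $\det(f_{ij})=f_{ii}f_{jj}-f_{ij}^2$ together with the divergence identities $(f_i^2f_j)_j=f_i^2f_{jj}+2f_if_jf_{ij}$, $(f_if_j^2)_i=f_{ii}f_j^2+2f_if_jf_{ij}$ and $(f_i^3)_i=3f_i^2f_{ii}$, into $\sum_{i<j}\big(\det(f_{ij})+(f_i^2f_j)_j+(f_if_j^2)_i\big)+\sum_i(f_i^3)_i$. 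The surviving $t^{2}e^{-2f}$ pieces, which come partly from the mixed block and partly from the $t^{2}\times(\text{derivative})$ cross-terms in the first block, assemble via $\triangle e^{-2f}=2e^{-2f}\big(2\sum_i f_i^2-\triangle f\big)$ and the definition \eqref{kappa} of $\kappa^2$ into $-\tfrac32 t^{2}\kappa^{2}\triangle e^{-2f}$, matching \eqref{fam1h5-p1}.

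The step I expect to be the real obstacle is the cancellation of the quartic-in-$t$ terms. Each of the forms $(\Omega^-)^{1}_{2},(\Omega^-)^{3}_{4},(\Omega^-)^{1}_{3},(\Omega^-)^{2}_{4},(\Omega^-)^{1}_{4},(\Omega^-)^{2}_{3}$ contains two coefficients proportional to $t^{2}e^{-2f}$, whose products generate $t^{4}e^{-4f}\,e^{1234}$ contributions with both $1/s^{4}$ and $1/s^{2}$ weights, and $(\Omega^-)^{5}_{6}$ produces a further $t^{4}e^{-4f}\,e^{1234}$ term of the same two weights; none of these appear in \eqref{fam1h5-p1}. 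The genuine content of the computation is therefore the verification that \emph{all} such $t^{4}$ terms cancel in the total sum: the $1/s^{4}$-weighted pieces from $(\Omega^-)^{1}_{3},(\Omega^-)^{2}_{4},(\Omega^-)^{1}_{4},(\Omega^-)^{2}_{3}$ must cancel the $1/s^{4}$ piece of $(\Omega^-)^{5}_{6}$, and the $1/s^{2}$-weighted pieces (from $(\Omega^-)^{1}_{2},(\Omega^-)^{3}_{4},(\Omega^-)^{1}_{3},(\Omega^-)^{2}_{4}$) must cancel its $1/s^{2}$ piece, with no $s^{0}$-weighted $t^{4}$ term ever arising. I would carry out this cancellation tracking the $s$-powers separately, and it is here, together with the reorganization of the remaining terms into divergence form, that I would spend the most care.
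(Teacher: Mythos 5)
Your proposal is correct and follows essentially the same route as the paper, whose proof is simply the ``long straightforward calculation'' of $\sum_{i<j}(\Omega^-)^i_j\wedge(\Omega^-)^i_j$ from the displayed curvature table; your self-dual/anti-self-dual bookkeeping, the divergence identities, and the identification of the $t^4$-cancellation (the $1/s^4$-weighted squares of $(\Omega^-)^{\bar1}_{\bar3},(\Omega^-)^{\bar2}_{\bar4},(\Omega^-)^{\bar1}_{\bar4},(\Omega^-)^{\bar2}_{\bar3}$ against $(\Omega^-)^{\bar5}_{\bar6}$, and likewise for the $1/s^2$ weights) are exactly the details the paper leaves implicit, and they check out. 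No gap.
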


\begin{proof}
The proof of \eqref{fam1h5-p1} is a long straightforward
calculations using the formulas for the curvature 2-form of
$\nabla^-$.
\end{proof}
{Note that even though the curvature 2-forms of $\nabla^-$ are quadratic in
the gradient of the dilaton, remarkably, the Pontrjagin form of $\nabla^-$
is also quadratic in these terms. }

\subsection{A conformally compact solution with negative $\protect\alpha^{\prime}$. Proof of Theorem~\protect\ref{t:cpmct strominger}}
 \label{ss:compact ex}

Here we give the proof of Theorem \ref{t:cpmct strominger}.

By \cite{y3} there is no compact solution of Strominger's system for
positive $\alpha^{\prime }$ in the case of the Chern connection on torus
bundle over $\mathbb{T}^4$. On the other hand, the existence of a solution on a
2-torus bundle over K3-surfaces given in \cite{y3} seems to depend on the
assumption $\alpha^{\prime }>0$, 
{\ whereas the existence of a solution with negative $\alpha^{\prime
}$ is not clear.
} 

The proof of Theorem \ref{t:cpmct strominger} occupies the remaining part
of Section \ref{ss:compact ex}. We begin with a Proposition defining the
instanton bundle.

\begin{prop} \label{instanton2} Let $A_{\lambda }$, $\lambda
=(\lambda _{1},\lambda _{2},\lambda _{3})\in \mathbb{R}^{3}$ be
the linear connection on $H_5$ whose non-zero 1-forms are given as
follows
\begin{equation*}
\begin{array}{l}
(\omega ^{A_{\lambda }})_{\bar 2}^{\bar 1}=-(\omega ^{A_{\lambda
}})_{\bar 1}^{\bar 2}=-(\omega ^{A_{\lambda }})_{\bar 4}^{\bar 3}=(\omega ^{A%
_{\lambda }})_{\bar 3}^{\bar 4}=-\lambda _{1}\,{\bar e}^{6}, \\[8pt]
(\omega ^{A_{\lambda }})_{\bar 3}^{\bar 1}=-(\omega ^{A_{\lambda
}})_{\bar 1}^{\bar 3}=(\omega ^{A_{\lambda }})_{\bar 4}^{\bar 2}=-(\omega ^{A%
_{\lambda }})_{\bar 2}^{\bar 4}=-\lambda _{2}\,{\bar e}^{6}, \\[8pt]
(\omega ^{A_{\lambda }})_{\bar 4}^{\bar 1}=-(\omega ^{A_{\lambda
}})_{\bar 1}^{\bar 4}=-(\omega ^{A_{\lambda }})_{\bar 3}^{\bar 2}=(\omega ^{A%
_{\lambda }})_{\bar 2}^{\bar 3}=-\lambda _{3}\,{\bar e}^{6}.%
\end{array}%
\end{equation*}%
Then, $A_{\lambda }$ is an $SU(3)$-instanton which preserves the
metric. Furthermore, the first Pontrjagin form of $A_{\lambda }$
is
\begin{equation}
8\pi ^{2}p_{1}(A_{\lambda })=-8t^{2}(1+\kappa ^{2})|\lambda |^{2}\,{e}%
^{1234}, \qquad |\lambda|^2=\lambda_1^2+\lambda_2^2+\lambda_3^2.
\label{abinst}
\end{equation}
\end{prop}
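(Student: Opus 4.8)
The plan is to exploit the one structural feature that makes everything collapse: every nonzero connection $1$-form $(\omega^{A_\lambda})^{\bar i}_{\bar j}$ is a \emph{constant} multiple of the single $1$-form $\bar e^6$. First I would dispose of metric compatibility, which is free: reading off the displayed $1$-forms one sees $(\omega^{A_\lambda})^{\bar i}_{\bar j}=-(\omega^{A_\lambda})^{\bar j}_{\bar i}$, so the connection is skew-symmetric in the orthonormal coframe $\{\bar e^1,\dots,\bar e^6\}$ and hence $\nabla^{A_\lambda}\bar g=0$.

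For the curvature I would observe that, since each $1$-form is proportional to $\bar e^6$, every quadratic term $\omega^{\bar i}_{\bar k}\wedge\omega^{\bar k}_{\bar j}$ in $\Omega^{\bar i}_{\bar j}=d\omega^{\bar i}_{\bar j}+\omega^{\bar i}_{\bar k}\wedge\omega^{\bar k}_{\bar j}$ is a multiple of $\bar e^6\wedge\bar e^6=0$ and drops out. Thus $\Omega^{\bar i}_{\bar j}=c^{\bar i}_{\bar j}\,d\bar e^6$, where $c^{\bar i}_{\bar j}$ is the constant attached to that $1$-form. Using $\bar e^6=e^6$ together with \eqref{fam1h5} and \eqref{conf} I would record $d\bar e^6=t e^{-2f}\bigl[-2(\bar e^{12}-\bar e^{34})-\tfrac1s(\bar e^{14}-\bar e^{23})\bigr]$, a linear combination of the two forms $A:=\bar e^{12}-\bar e^{34}$ and $B:=\bar e^{14}-\bar e^{23}$, which are anti-self-dual on the base and hence primitive $(1,1)$.

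The instanton assertion then reduces to checking that $A$ and $B$ satisfy both conditions in \eqref{in2}. Passing to the $J$-adapted coframe $\phi^1=\bar e^1+\sqi\,\bar e^2,\ \phi^2=\bar e^3+\sqi\,\bar e^4$ (with $J$ as in \eqref{adapted-basis}) I would verify $A=\tfrac{\sqi}{2}(\phi^1\wedge\bar\phi^1-\phi^2\wedge\bar\phi^2)$ and $B=\tfrac{\sqi}{2}(\phi^1\wedge\bar\phi^2-\bar\phi^1\wedge\phi^2)$, exhibiting both as $(1,1)$-forms with vanishing $\bar F$-trace (the diagonal terms $\phi^k\wedge\bar\phi^k$ either cancel or are absent); equivalently one checks $\sum_k A(\bar e_k,J\bar e_k)=\sum_k B(\bar e_k,J\bar e_k)=0$ directly. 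Since every $\Omega^{\bar i}_{\bar j}$ lies in $\langle A,B\rangle$, all curvature $2$-forms are primitive $(1,1)$, so $A_\lambda$ is an $SU(3)$-instanton.

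Finally, for the Pontrjagin form I would use $8\pi^2 p_1(A_\lambda)=\sum_{\bar i<\bar j}\Omega^{\bar i}_{\bar j}\wedge\Omega^{\bar i}_{\bar j}=\bigl(\sum_{\bar i<\bar j}(c^{\bar i}_{\bar j})^2\bigr)\,d\bar e^6\wedge d\bar e^6$. Reading off the six nonzero constants (each of $\lambda_1^2,\lambda_2^2,\lambda_3^2$ occurring twice) gives $\sum_{\bar i<\bar j}(c^{\bar i}_{\bar j})^2=2|\lambda|^2$, while $A\wedge A=B\wedge B=-2\bar e^{1234}$ and $A\wedge B=0$ yield $d\bar e^6\wedge d\bar e^6=-2t^2(4+\tfrac1{s^2})\bar e^{1234}$. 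Converting $\bar e^{1234}=e^{4f}e^{1234}$ and using $4+\tfrac1{s^2}=2(1+\kappa^2)$ from \eqref{kappa} produces $d\bar e^6\wedge d\bar e^6=-4t^2(1+\kappa^2)e^{1234}$, and multiplying by $2|\lambda|^2$ gives \eqref{abinst}. The only genuine work is bookkeeping: tracking the signs of the constants $c^{\bar i}_{\bar j}$ and the self-intersections of $A$ and $B$. The conceptual step, namely the vanishing of the quadratic curvature terms, is immediate, so I expect no real obstacle beyond keeping the constants straight.
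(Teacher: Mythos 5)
Your proof is correct and follows essentially the same route as the paper: compute the curvature forms (which, as you note, reduce to $c^{\bar i}_{\bar j}\,d\bar e^6$ since the quadratic terms carry $\bar e^6\wedge\bar e^6=0$), check that $\bar e^{12}-\bar e^{34}$ and $\bar e^{14}-\bar e^{23}$ are primitive $(1,1)$-forms so that \eqref{in2} holds, and sum the squares of the constants against $d\bar e^6\wedge d\bar e^6$ to get \eqref{abinst}. The only blemish is the intermediate display $d\bar e^6\wedge d\bar e^6=-2t^2(4+\tfrac{1}{s^2})\,\bar e^{1234}$, which omits the factor $e^{-4f}$ coming from squaring $d\bar e^6=te^{-2f}[\cdots]$; since $e^{-4f}\,\bar e^{1234}=e^{1234}$ this cancels in the conversion step and your final formula agrees with the paper's $8\pi^2p_1(A_\lambda)=-4t^2(4+1/s^2)|\lambda|^2e^{-4f}\bar e^{1234}=-8t^2(1+\kappa^2)|\lambda|^2e^{1234}$.
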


\begin{proof}
{\ A direct calculation shows that the non-zero curvature forms $(\Omega ^{%
A_{\lambda }})_{\bar j}^{\bar i}$ of the connection $A_{\lambda }$
are:
\begin{equation*}
\begin{array}{l}
(\Omega ^{A_{\lambda }})_{\bar 2}^{\bar 1}=-(\Omega ^{A_{\lambda
}})_{\bar 1}^{\bar 2}=-(\Omega ^{A_{\lambda }})_{\bar 4}^{\bar 3}=(\Omega ^{A%
_{\lambda }})_{\bar 3}^{\bar 4}=-\lambda _{1}\,d\bar e^{6}=2t\lambda _{1}e^{-2f}(\bar e^{12}-\bar e^{34})+%
\frac{t}{s}\lambda _{1}e^{-2f}(\bar e^{14}-\bar e^{23}), \\[8pt]
(\Omega ^{A_{\lambda }})_{\bar 3}^{\bar 1}=-(\Omega ^{A_{\lambda
}})_{\bar 1}^{\bar 3}=(\Omega ^{A_{\lambda }})_{\bar 4}^{\bar 2}=-(\Omega ^{A%
_{\lambda }})_{\bar 2}^{\bar 4}=-\lambda _{2}\,d\bar e^{6}=2t\lambda _{2}e^{-2f}(\bar e^{12}-\bar e^{34})+%
\frac{t}{s}\lambda _{2}e^{-2f}(\bar e^{14}-\bar e^{23}), \\[8pt]
(\Omega ^{A_{\lambda }})_{\bar 4}^{\bar 1}=-(\Omega ^{A_{\lambda
}})_{\bar 1}^{\bar 4}=-(\Omega ^{A_{\lambda }})_{\bar 3}^{\bar 2}=(\Omega ^{A%
_{\lambda }})_{\bar 2}^{\bar 3}=-\lambda _{3}\,d\bar e^{6}=2t\lambda _{3}e^{-2f}(\bar e^{12}-\bar e^{34})+%
\frac{t}{s}\lambda _{3}e^{-2f}(\bar e^{14}-\bar e^{23}).%
\end{array}%
\end{equation*}%
It is straightforward to see that $A_{\lambda }$ satisfies
\eqref{in2} and therefore it is an $SU(3)$-instanton. After
another lengthy calculation we see $8\pi ^{2}p_{1}(A_{\lambda
})=-4t^{2}(4+1/s^{2})|\lambda
|^{2}e^{-4f}\,\bar{e}^{1234}$, which in view of \eqref{kappa} and %
\eqref{conf} implies formula \eqref{abinst}. }
\end{proof}

Now, we suppose that the function $f$ depends on one variable, say $%
f=f(x^{1})$. Using $\left( f^{\prime \prime }-2f^{\prime 2}\right) e^{-2f}=-%
\frac{1}{2}\left( e^{-2f}\right) ^{\prime \prime }$ we have from %
\eqref{fam1h5-p1}
\begin{equation}
8\pi ^{2}p_{1}(\nabla ^{-})=4\left( 2f^{\prime 3}-3t^{2}\kappa ^{2}\left(
e^{-2f}\right) ^{\prime }\right) ^{\prime }\,{e}^{1234}.  \label{p01}
\end{equation}%
Furthermore, from \eqref{fam1h5-abel-diff-new-torsion}
\begin{equation}
d\bar{T}=-\left( \left( e^{2f}\right) ^{\prime \prime }+8t^{2}\kappa
^{2}\right) \,{e}^{1234}.  \label{tor}
\end{equation}%
In view of \eqref{abinst}, \eqref{p01} and \eqref{tor} the anomaly
cancellation condition \eqref{acgen}, i.e., $d\bar{T}=\frac{\alpha ^{\prime }%
}{4}8\pi ^{2}\Big(p_{1}(\nabla ^{-})-p_{1}(A_{\lambda })\Big)$,
takes the form of a single ODE for the function $f$
\begin{equation}
\left( \left( e^{2f}\right) ^{\prime }-3\alpha ^{\prime }t^{2}\kappa
^{2}\left( e^{-2f}\right) ^{\prime }+2\alpha ^{\prime }f^{\prime 3}\right)
^{\prime }+8t^{2}\kappa ^{2}+2\alpha ^{\prime }t^{2}(1+\kappa ^{2})|\lambda
|^{2}=0.  \label{eqcan}
\end{equation}%
For a negative $\alpha ^{\prime }$ we choose $\kappa ^{2}$ or $|\lambda |^{2}
$ so that $8t^{2}\kappa ^{2}+2\alpha ^{\prime }t^{2}(1+\kappa ^{2})|\lambda
|^{2}=0$, i.e., we let
\begin{equation*}
\alpha ^{\prime }=-\alpha^2,\qquad 4\kappa ^{2}=\alpha ^{2}(1+\kappa
^{2})|\lambda |^{2},
\end{equation*}
which simplifies \eqref{eqcan} to the ordinary differential equation

\begin{equation}  \label{solv4}
\left( e^{2f}\right) ^{\prime }+3\alpha ^{2}t^{2}\kappa ^{2}\left(
e^{-2f}\right) ^{\prime }-2\alpha ^{2 }f^{\prime 3}=A=const.
\end{equation}%
At this point we let $u=\alpha^{-2} e^{2f}$. With this substitution the
left-hand side of \eqref{solv4} becomes 
\begin{equation*}
\left( e^{2f}\right) ^{\prime }-3\alpha ^{\prime }t^{2}\left( e^{-2f}\right)
^{\prime }+2\alpha ^{\prime }f^{\prime 3}=\frac{\alpha^2 u^{\prime }}{4u^{3}}%
\left( 4u^{3}-12\frac {t^{2}\kappa ^{2}}{\alpha ^{2 }}u-u^{\prime
2}\right) .
\end{equation*}%
For $A=0$ consider the following ordinary differential equation for the
function $u=u(x^1)>0$ 
\begin{equation}  \label{solv5}
u^{\prime 2}={4}u^{3}-12\frac {t^{2}\kappa ^{2}}{\alpha ^{2 }}u=4u\left( u-a\right) \left(
u+a\right) ,\qquad a=\kappa |t|\sqrt{3}/ \alpha.
\end{equation}%
Equation \eqref{solv5} can also be considered in the complex plane by
replacing the real derivative with the complex derivative which turns it
into the Weierstrass' equation
\begin{equation*}  
\left (\frac {d\, \mathcal{P}}{dz}\right )^2=4\mathcal{P}\left( \mathcal{P}%
-a\right) \left( \mathcal{P}+a\right)
\end{equation*}
for the doubly periodic Weierstrass $\mathcal{P}$ function with a pole at
the origin where it has the expansion
\begin{equation*}
\mathcal{P}(z) = \frac {1}{z^2} + \frac {a^2}{5} z^2 + bz^6 +
\cdots,
\end{equation*}
(no $z^4$ term and only even powers). In addition, as well known
\cite{Erd} and \cite{Ahl}, letting $\tau_{\pm }$ be the basic
half-periods such that
$tau _{+}$ is real and $\tau _{-}$ is purely imaginary we have that $%
\mathcal{P}$ is real valued on the lines $\mathfrak{R}\mathfrak{e}%
\,z=m\tau _{+}$ or $\mathfrak{I}\mathfrak{m}\,z=im\tau _{-}$,
$m\in \mathbb{Z}$. Furthermore, in the fundamental region centered
at the origin, where $\mathcal{P}$  has a pole of order two, we
have that $\mathcal{P}(z)$ decreases from $+\infty $ to $a$ to $0$
to $-a$ to $-\infty $ as $z$ varies
along the sides of the half-period rectangle from $0$ to $\tau _{+}$ to $%
\tau _{+}+\tau _{-}$ to $\tau _{-}$ to $0$.

Thus, $u(x^1)=\mathcal{P}(x^1)$ defines a non-negative
$2\tau_{+}$-periodic function with singularities at the points $2n \tau_{+}$,
$n\in \mathbb{Z}$, which solves the real equation \eqref{solv5}.
From the Laurent expansion of the Weierstrass' function it follows
\begin{equation*}
u(x_1)=\frac {1}{(x^1)^2}\left (1+ \frac {a^2}{5} (x^1)^4 + \cdots
\right).
\end{equation*}
By construction, $f= \frac 12 \ln (\alpha^2 u)$ is a periodic function with
singularities on the real line which is a solution to equation \eqref{eqcan}
sufficient for the anomaly cancellation condition. {\ 
Therefore the $SU(3)$ structure defined by $\bar F$ and the
non-degenerate (3,0) form $\bar {\Psi} = (\bar e^1+i\bar
e^2)\wedge(\bar e^3+i \bar e^4)\wedge (\bar e^5 +i \bar e^6)$
descends to the $6$-dimensional nilmanifold $M^{6}=\Gamma
\backslash H_{5}$ with singularity, determined by the singularity of $u$, where $H_{5}$
is the 2-step nilpotent Lie group with Lie algebra $\mathfrak{h}_{5}$, defined by %
\eqref{fam1h5}, and $\Gamma $ is a lattice with the same period as $f$, i.e., $2 \tau_{+}$ in all variables. In
fact, as seen from the asymptotic behavior of $u$, $M^6$ is the total space
of a $\mathbb{T}^2$ bundle over the asymptotically hyperbolic manifold $M^4$ with
metric
\begin{equation*}
\bar g_H =u(x^1)\left ( (dx^1)^2+(dx^2)^2 + (dx^3)^2 +(dx^4)^2
\right ),
\end{equation*}
which is a conformally compact 4-torus with conformal boundary at infinity a
flat 3-torus. Thus, we 
conclude that there is a complete solution with non-constant dilaton,
non-trivial instanton and flux and with a negative $\alpha ^{\prime }$
parameter. This completes the proof of Theorem \ref{t:cpmct strominger}. }

A few remarks are in order. First, since the function $u$ has a
$\mathbb{Z}_2 $-symmetry determined by the symmetry with respect
to the line $x^1=\tau_+$ we also obtain a solution on the quotient
$M^6/\mathbb{Z}_2$.

Second, the function $v(x_1)=\mathcal{P}(\tau _{+} +ix_1)$, which
is the
restriction of $\mathcal{P}$ to the line $\mathfrak{R}\mathfrak{e}%
\,z=\tau _{+}$ leads to a solution ``equivalent'' to the one
described above, taking into account the invariance under
translation in $x_1$. Indeed, clearly $(v^{\prime })^2=-\left
(\frac {d\, \mathcal{P}}{dz}\right )^2$ hence $v$ satisfies
\begin{equation*} 
(v^{\prime })^2=-4v\left( v-a\right) \left( v+a\right) ,\qquad a=\kappa |t|%
\sqrt{3}/ \alpha.
\end{equation*}
The above equation is \eqref{solv5} with the substitution $u=\frac
{a^2}{v}$ which shows that we obtain the same six dimensional
$SU(3)$ structure. We note that $v$ is an even non-negative
periodic function with period $2i\tau _{-}$ (without a loss of
generality $i\tau _{-} >0$) such that $v(-i\tau _{-})=v(i\tau _{-}
)=0$, $v(0)=a$ and $v$ increases on the interval $(-i\tau _{-}
,0)$.



\subsection{Complete solution with positive $\protect\alpha^{\prime}$. Proof of Theorem~\protect\ref{t:main2}}
\label{ss:non-compact ex}

{Let us consider a connection $%
A_{a,d}$ depending on parameters $a,d\in \mathbb{R}$, $d\not=0$, whose
non-zero connection 1-forms $(\omega ^{A_{a,d}})_{\bar j}^{\bar i}$ in the basis $\{%
\bar{e}^{1},\ldots ,\bar{e}^{6}\}$ are as follows {\small
\begin{equation*}  \label{Aad}
\begin{aligned} & (\omega ^{A_{a,d}})_{\bar 2}^{\bar 1} =e^{-f}\left[
f_{2}\,\bar{e}^{1}-f_{1}\,\bar{e}^{2}+f_{4}\,\bar{e}^{3}-f_{3}
\,\bar{e}^{4}\right],\qquad (\omega ^{A_{a,d}})_{\bar 3}^{\bar 1}=
e^{-f}\left[
f_{3}\,\bar{e}^{1}-f_{4}\,\bar{e}^{2}-f_{1}\,\bar{e}^{3}+f_{2}\,\bar{e}^{4}%
\right], \\[2pt] & (\omega ^{A_{a,d}})_{\bar 4}^{\bar 1}=e^{-f}\left[
f_{4}\,\bar{e}^{1}+f_{3}\,\bar{e}^{2}-f_{2}\,\bar{e}^{3}-f_{1}\,\bar{e}^{4}%
\right] ,\qquad (\omega ^{A_{a,d}})_{\bar 5}^{\bar 1}=
-\frac{a}{d}\,e^{-2f}\bar{e}^{3}, \\[2pt] & (\omega ^{A_{a,d}})_{\bar 6}^{\bar 1} =
ae^{-2f}\left[ 2\,\,\bar{e}^{2}+1/d\, \bar{e}^{4}\right] ,\qquad
(\omega ^{A_{a,d}})_{\bar 3}^{\bar 2}= e^{-f}\left[
f_{4}\,\bar{e}^{1}+f_{3}\,\bar{e}^{2}-f_{2}\,\bar{e}^{3}-f_{1}\,\bar{e}^{4}%
\right] , \\[2pt] & (\omega ^{A_{a,d}})_{\bar 4}^{\bar 2}=e^{-f} \left[
-f_{3}\,\bar{e}^{1}+f_{4}\,\bar{e}^{2}+f_{1}\,\bar{e}^{3}-f_{2}\,\bar{e}^{4}%
\right] ,\qquad (\omega ^{A_{a,d}})_{\bar 5}^{\bar
2}=-\frac{a}{d}\, e^{-2f}\bar{e}^{4}, \\ 
& (\omega
^{A_{a,d}})_{\bar 6}^{\bar 2}=-a\,e^{-2f}\left[
2\,\bar{e}^{1}+\frac{1}{d}\bar{e}^{3}\right] ,\qquad
(\omega ^{A_{a,d}})_{\bar 4}^{\bar 3}=e^{-f}\left[
f_{2}\,\bar{e}^{1}-f_{1}\,\bar{e}^{2}+f_{4}\,\bar{e}^{3}-f_{3}\,\bar{e}^{4}%
\right], \\ 
& (\omega ^{A_{a,d}})_{\bar 5}^{\bar 3}= \frac{a}{d}
\, e^{-2f}\bar{e}^{1},\qquad (\omega ^{A_{a,d}})_{\bar 6}^{\bar
3}=ae^{-2f}\left[ \frac{1}{d}\,\bar{e}^{2}-2\,\bar{e}^{4}\right],
\\ & (\omega ^{A_{a,d}})_{\bar 5}^{\bar 4}=
\frac{a}{d}\,e^{-2f}\,\bar{e}^{2},\qquad (\omega ^{A_{a,d}})_{\bar
6}^{\bar 4}=-ae^{-2f}\left
[\frac{1}{d}\,\bar{e}^{1}-2\,\bar{e}^{3}\right].\end{aligned}
\end{equation*}
}}
\begin{lemma}
\label{l:insab} $A_{a,d}$ is an instanton with respect to the
$SU(3)$ structure defined with the help of the basis \eqref{conf} if
and only if the dilaton function satisfies
\begin{equation}  \label{insab}
\triangle e^{2f}=-8\tau ^{2}a^{2},\qquad \tau ^{2}=\frac{1}{2}\left( 2+\frac{%
1}{d^{2}}\right).
\end{equation}
\end{lemma}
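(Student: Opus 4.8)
The plan is to reduce Lemma~\ref{l:insab} to Corollary~\ref{c:nablamin inst} by a substitution principle, thereby avoiding a second lengthy curvature computation. The starting observation is that, expressed in the orthonormal coframe $\{\bar e^1,\dots,\bar e^6\}$, \emph{every} connection $1$-form of $A_{a,d}$ is a combination of $\bar e^1,\dots,\bar e^4$ alone, with no $\bar e^5$ or $\bar e^6$ leg, exactly as for the $(-)$-connection in \eqref{connection-1-form-for-mu}; and $A_{a,d}$ is literally $\nabla^-$ after the replacement $t\mapsto a,\ s\mapsto d$ (the $f$-dependent $4\times 4$ block is unchanged, since it contains no $t,s$, while $t/s\mapsto a/d$ and $2t\mapsto 2a$, $1/2s\mapsto 1/2d$ in the entries carrying the indices $5,6$). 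I would record this identification as the first step.

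Second, I would argue that this replacement passes to the curvature, i.e.\ $(\Omega^{A_{a,d}})^{\bar i}_{\bar j}=(\Omega^-)^{\bar i}_{\bar j}|_{t\mapsto a,\ s\mapsto d}$. In $\Omega=d\omega+\omega\wedge\omega$ the quadratic term transforms by the substitution simply because $\omega$ does. For the term $d\omega$, note that since no connection $1$-form carries a $\bar e^5,\bar e^6$ leg, only $d\bar e^1,\dots,d\bar e^4$ are ever produced; these structure equations depend on $f$ alone and are free of $t,s$, whereas the two equations $d\bar e^5,d\bar e^6$ that do contain $t,s$ never enter. Thus $t,s$ appear in $\Omega^-$ \emph{only} through products of connection $1$-forms, which is precisely where the substitution acts, and the whole curvature transforms by $t\mapsto a,\ s\mapsto d$.

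Third, I would invoke that the $SU(3)$-instanton condition \eqref{in2} is a pointwise condition on the components of the curvature $2$-forms in the coframe $\{\bar e^i\}$ together with the complex structure $J$, and that in this coframe both $J$ (with $J\bar e^1=-\bar e^2$, etc.) and the K\"ahler form $\bar F=\bar e^{12}+\bar e^{34}+\bar e^{56}$ are the standard ones, manifestly independent of $t,s$. Hence the instanton condition for $A_{a,d}$ is obtained from that for $\nabla^-$ by the same substitution. Since Corollary~\ref{c:nablamin inst} gives that $\nabla^-$ is an instanton if and only if $\triangle e^{2f}+8t^2\kappa^2=0$ with $\kappa^2=\frac{1}{2}(2+1/s^2)$ --- an equivalence which holds identically in the parameters --- substituting $t\mapsto a,\ s\mapsto d$, so that $\kappa\mapsto\tau$, yields that $A_{a,d}$ is an instanton if and only if $\triangle e^{2f}+8a^2\tau^2=0$, which is exactly \eqref{insab}.

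I expect the only real obstacle to be the honest verification of the substitution principle: that no $1$-form of $A_{a,d}$ has a $\bar e^5$ or $\bar e^6$ leg and that $(\omega^{A_{a,d}})^{\bar 5}_{\bar 6}=0$ (just as $(\omega^-)^{\bar 5}_{\bar 6}=0$), so that $d\bar e^5,d\bar e^6$ are genuinely never differentiated into $\Omega^{A_{a,d}}$. As a fallback, or an independent check, one may compute $\Omega^{A_{a,d}}$ outright following the template of Proposition~\ref{instanton2}; then \eqref{in2} splits into the type-$(1,1)$ requirement (vanishing of the $\bar e^{13}-\bar e^{24}$ and $\bar e^{14}+\bar e^{23}$ parts of $(\Omega^{A_{a,d}})^{\bar 1}_{\bar 3},(\Omega^{A_{a,d}})^{\bar 1}_{\bar 4},(\Omega^{A_{a,d}})^{\bar 2}_{\bar 3},(\Omega^{A_{a,d}})^{\bar 2}_{\bar 4}$) and the primitivity requirement $\sum_{k}(\Omega^{A_{a,d}})^{\bar i}_{\bar j}(\bar e_k,J\bar e_k)=0$ (nontrivial only for $(\Omega^{A_{a,d}})^{\bar 1}_{\bar 2}$ and $(\Omega^{A_{a,d}})^{\bar 3}_{\bar 4}$), and the reassuring point --- consistent with the identity \eqref{dtr} --- is that \emph{both} families of conditions collapse to the single scalar equation \eqref{insab}.
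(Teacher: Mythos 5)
Your proposal is correct and takes essentially the same route as the paper: the paper's entire proof is the observation that the connection $1$-forms of $A_{a,d}$ are those of \eqref{connection-1-form-for-mu} with $t\mapsto a$, $s\mapsto d$, after which the claim follows from Corollary~\ref{c:nablamin inst}. Your extra verification that the substitution genuinely passes to the curvature (no $\bar e^5,\bar e^6$ legs in the connection forms, so $d\bar e^5,d\bar e^6$ never enter $d\omega$) is a detail the paper leaves implicit.
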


\begin{proof}
Observe that the connection 1-forms of $A_{a,d}$ are given by %
\eqref{connection-1-form-for-mu} replacing $t$ with $a$ and $s$ with $d$.
Then the assertion follows from Corollary~\ref{c:nablamin inst}.
\end{proof}

Lemma \ref{l:insab} shows, in particular, that $A_{a,d}$ is an instanton
with respect to the SU(3) structure determined by the basis \eqref{conf} if
\begin{equation}
e^{2f}=h(x)-a^{2}\tau ^{2}|x|^{2},  \label{insa-sol}
\end{equation}%
where $h$ is a harmonic function on $\mathbb{R}^4$.

{\ The expression \eqref{fam1h5-p1} yield that the difference between the
first Pontrjagin forms of $\nabla^-$ and $A_{a,d}$ is given by the formula
\begin{multline}  \label{p11}
8\pi^2\Big(p_1(\nabla^-)-p_1(A_{a,d})\Big) =\frac{12}{d^2s^2}\Big[%
a^2(1+2d^2)s^2-d^2(1+2s^2)t^2\Big] e^{-6f}\Big (2\vert f\vert^2 - \triangle f%
\Big)\, \bar{e}^{1234} \\
= -24\left ( t^2\kappa^2 -a^2\tau^2\right)e^{-4f}\left (\triangle
e^{-2f}\right)\, \bar{e}^{1234}=-24\left ( t^2\kappa^2
-a^2\tau^2\right)\left (\triangle e^{-2f}\right)\, {e}^{1234}.
\end{multline}
On the other hand recalling \eqref{fam1h5-abel-diff-new-torsion} and taking
into account \eqref{p11}, we have that the anomaly cancellation condition
\begin{multline*}
d\bar T-\frac{\alpha^{\prime }}48\pi^2\Big(p_1(\nabla^-)-p_1(A_{a,d})\Big) %
=-\left [ \triangle e^{2f}+8t^2\kappa^2 - 3{\alpha^{\prime }}
(t^2\kappa^2-a^2\tau^2) \triangle e^{-2f} \right]\, {e}^{1234}=0
\end{multline*}
simplifies to the single equation for the dilaton
$
\triangle e^{2f}+8t^2\kappa^2 - 3{\alpha^{\prime }}
(t^2\kappa^2-a^2\tau^2) \triangle e^{-2f}=0.$ 

Thus a non-trivial dilaton is given by \eqref{insa-sol} which
satisfies the equation
\begin{equation}  \label{anomaly cancel}
(t^2\kappa^2-a^2\tau^2)\left (8 - 3{\alpha^{\prime }} \triangle
e^{-2f}\right)=0.
\end{equation}
We analyze the solution in the next two cases. }

{\normalsize \textbf{Case 1.} Here $t^2\kappa^2-a^2\tau^2=0$, hence by %
\eqref{p11} the anomaly condition is trivially satisfied for any $%
\alpha^{\prime }$, provided the torsion is closed. In this case the solution
is given by the solutions of \eqref{insab}. Furthermore, taking into account
Corollary \ref{c:nablamin inst} both $\nabla^{-}$ and $A_{a,d}$ are
instantons. For example, a particular case of \eqref{insa-sol} is the
solution
\begin{equation*}
e^{2f}=a^2\tau^2(1-|x|^2)
\end{equation*}
defined in the unit ball. }

Notice that in this case we also obtain a solution of the type II theory,
see \cite{CCDLMZ} for the case of the Iwasawa manifold and \cite[Section VII]%
{GMW} for the general case of two flat directions fibered over a four
dimensional base $M_0$.

\textbf{Case 2.} Here $t^2\kappa^2-a^2\tau^2\not=0$, hence the anomaly
condition is non-trivial.


We need to solve the system of the two equations \eqref{insab} and %
\eqref{anomaly cancel}. To get a solution we take $a=0$ in \eqref{insab} and
arrive to the next two equations for the dilaton $f$: 
\begin{equation*}
\triangle e^{2f}=0, \qquad \triangle e^{-2f}=8/(3\alpha^{\prime }).
\end{equation*}
Hence the solution with a singularity is given by
\begin{equation}  \label{fundsol}
e^{2f}= \frac {3\alpha^{\prime }}{|x-b|^2}, \quad b\in \mathbb{R}^4,
\end{equation}
As a result of the above arguments we obtain a non-compact solution with
non-constant dilaton, non-trivial instanton and flux with positive $%
\alpha^{\prime }$. This solution is similar to the multi-instanton solution
considered in \cite{CHS}. 
Taking into account that $H_5$ is a $\mathbb{R}^2$-bundle over $\mathbb{R}^4$%
, and using logarithmic radial coordinates near the singularity as in \cite%
{CHS} it follows that  
the $4-D$ metric induced on $\mathbb{R}^4$ is actually complete. In fact,
taking the singularity at the origin, in the coordinate $q=\sqrt {%
3\alpha^{\prime }}/2 \, \ln \left( {|x|^2}/{3\alpha^{\prime }}\right)=-\sqrt{%
3\alpha^{\prime }}\, f$, we have
that the dilaton and the $4-D$ metric can be expressed as follows
\begin{equation*}
\bar g_H=\sum_{i=1}^4 e^{2f}(e^i)^2=dq^2+3\alpha^{\prime }ds^2_3,\qquad f=-q
\sqrt {3\alpha^{\prime }},
\end{equation*}
where $ds^2_3$ is the metric on the unit three-dimensional sphere in the
four dimensional Euclidean space. The completeness of the horizontal
metric implies that the metric
\begin{equation*}
\bar g=\bar g_H +(e^5)^2+(e^6)^2
\end{equation*}
is also complete. 
This finishes the proof of Theorem \ref{t:main2}.

\section{Contraction and the Lie algebra $\mathfrak{h}_3$}
{\ \label{s:alg all 6-D} }

Taking into account that the Lie algebra $\mathfrak{%
h}_3$ is a contraction of the Lie algebra $\mathfrak{h}_5 $ we can obtain solutions for the Lie algebra $\mathfrak{h}_3$ from the solution on $\mathfrak{h}_5$. Indeed, letting $%
\frac{t}{s}\rightarrow 0$ in \eqref{fam1h5} we obtain
\begin{equation*}  
de^1 = de^2=de^3=de^4=0,\quad de^5 = 0, \quad de^6 = -2\, t\,(e^{12}-e^{34})
\end{equation*}
which are the structure equations of $\mathfrak{h}_3$.

Correspondingly, using equations \eqref{conf} we obtain the
structure equations
\begin{equation*}  
\begin{aligned} &d \bar{e}^1 = -e^{-f}\left (f_2\, \bar{e}^{12} +f_3\,
\bar{e}^{13} +f_4\, \bar{e}^{14}\right),\quad &d \bar{e}^2 = e^{-f}\left
(f_1\, \bar{e}^{12} - f_3\, \bar{e}^{23} - f_4\, \bar{e}^{24}\right
),\\[7pt] &d \bar{e}^3 = e^{-f}\left (f_1\, \bar{e}^{13} + f_2\,
\bar{e}^{23} - f_4\, \bar{e}^{34}\right ),\quad &d \bar{e}^4 = e^{-f}\left
(f_1\, \bar{e}^{14} + f_2\, \bar{e}^{24} + f_3\, \bar{e}^{34}\right
),\\[7pt] &d \bar{e}^5 = 0,\quad &d \bar{e}^6 = -2t e^{-2f}(\bar{e}^{12} -
\bar{e}^{34}). \end{aligned}
\end{equation*}
As before, we consider the $\nabla^-$ connection described in %
\eqref{connection-1-form-for-mu}. We define the connection $A_a$ by letting
the parameter $d\rightarrow\infty$ in \eqref{Aad}, or equivalently $\frac{a}{%
d}\rightarrow 0$. All remaining calculation in Section
\ref{s:alg5} are valid by taking the above described limits. As a
result, Theorem~\ref{t:cpmct strominger} and
Theorem~\ref{t:main2}
give solutions with non-constant dilaton on $%
\mathfrak{h}_3$.

\begin{rmrk}
\label{r:nablaplusinst} It can be checked from the expression for the curvature $2$-forms of $\nabla^-$ using %
\eqref{dtr} and \eqref{fam1h5-abel-diff-new-torsion} that the connection $%
\nabla^+ $ is an $SU(3)$-instanton if and only if $f$ is a
constant function and $ \frac{t}{s}\rightarrow 0$, i.e. the
connection $\nabla^+ $ is an $SU(3)$-instanton if and only
if $f$ is constant and the group is~$H_3$.
\end{rmrk}

{\bf Acknowledgments.} The work was partially
supported through Project MICINN (Spain) MTM2011-28326-C02-01/02, and Project
of UPV/EHU ref.\ UFI11/52. DV was partially supported by Simons Foundation grant \#279381.

\end{document}